\tikzset{ext/.style={circle, draw,inner sep=1pt},int/.style={circle,draw,fill,inner sep=1pt},nil/.style={inner sep=1pt}}
\tikzset{exte/.style={circle, draw,inner sep=3pt},inte/.style={circle,draw,fill,inner sep=3pt}}
\tikzset{diagram/.style={matrix of math nodes, row sep=3em, column sep=2.5em, text height=1.5ex, text depth=0.25ex}}
\tikzset{diagram2/.style={matrix of math nodes, row sep=0.5em, column sep=0.5em, text height=1.5ex, text depth=0.25ex}}
\theoremstyle{plain}
  \newtheorem{thm}{Theorem}
  \newtheorem{defi}{Definition}
  \newtheorem{prop}{Proposition}
  \newtheorem{lemma}{Lemma}
\theoremstyle{definition}
  \newtheorem*{rem}{Remark}
\newcommand{\p}{\partial}
\newcommand{\C}{{\mathbb{C}}}
\newcommand{\R}{{\mathbb{R}}}
\newcommand{\bbH}{{\mathbb{H}}}
\newcommand{\bpm}{\begin{pmatrix}}
\newcommand{\epm}{\end{pmatrix}}
\newcommand{\Tpoly}{ T_{\rm poly} }
\newcommand{\Dpoly}{ D_{\rm poly} }
\newcommand{\mU}{\mathcal{U}}
\newcommand{\supp}{{\mathrm{supp}}}
\newcommand{\gstar}{{\mathrm{star}}}
\newcommand{\lca}{{\mathrm{lca}}}
\newcommand{\ocd}{{\mathrm{gcd}}}
\newcommand{\tops}{{\mathit{top}}}
\begin{document}
\title{Logarithms and deformation quantization}
%\shorttitle{ee}
\author{Anton Alekseev}
\address{Section of Mathematics\\ University of Geneva \\ 2-4 rue du Li\`evre, CP 64, 1211 Gen\`eve 4, Switzerland}
\email{Anton.Alekseev@unige.ch}

%\author{Johannes L\"offler}
%\address{Max Planck Institute for Mathematics\\ 
%Vivatsgasse 7 \\
%53111 Bonn, Germany}
%\email{johannes@mpim-bonn.mpg.de}

\author{Carlo A. Rossi}
\address{Zurich, Switzerland}
\email{mquve.odessa@gmail.com}

%\author{Charles Torossian}
%\address{Universit\' e Denis-Diderot-Paris 7 \\ UFR de math\'ematiques \\ Site Chevaleret, Case 7012 \\ 75205 Paris cedex 13, France }
%\email{torossian@math.jussieu.fr}

\author{Charles Torossian}
\address{Institut Math\'ematiques de Jussieu-Paris rive gauche, Universit\'e Paris Diderot,
UFR de math\'ematiques Case 7012, 75205 Paris Cedex 13} \email{torossian@math.univ-paris-diderot.fr}

\author{Thomas Willwacher}
\address{Department of Mathematics\\ University of Zurich\\ Winterthurerstrasse 190 \\ 8057 Zurich, Switzerland}
\email{thomas.willwacher@math.uzh.ch}

%\subjclass[2000]{16E45; 53D55; 53C15; 18G55}
% \date{}
\keywords{Kontsevich Formality, Deformation Quantization}

\begin{abstract}
We prove the statement/conjecture of M. Kontsevich on the existence of the logarithmic formality morphism $\mU^{{\rm log}}$. This question was open since 1999,  and the main obstacle was the presence of $dr/r$ type singularities near the boundary $r=0$ in the integrals over compactified configuration spaces.  The novelty of our approach is the use of local torus actions on configuration spaces of points in the upper half-plane. It gives rise to a version of Stokes' formula for differential forms with singularities at the boundary which implies the formality property of $\mU^{{\rm log}}$. 

We also show that the logarithmic formality morphism admits a globalization from $\R^d$ to an arbitrary smooth manifold.
\end{abstract}

\maketitle

\section{Introduction}

The deformation quantization problem for smooth manifolds was solved by M. Kontsevich in \cite{K1}. The solution is a corollary of the Formality Theorem which asserts the existence of an $L_\infty$-quasi-isomorphism
\[
 \mU\colon \Tpoly M \to \Dpoly M
\]
from the graded Lie algebra of multivector fields $\Tpoly M$ to the differential graded Lie algebra of multi-differential operators $\Dpoly M$ on a smooth manifold $M$. The main part of the argument is an explicit formula for the morphism $\mU$ in the case of $M=\R^d$ with coefficients 
defined as integrals over compactified configuration spaces of points in the upper half-plane,
\[
 \varpi_\Gamma = \int_{{\rm Conf}_{n,m}} \Omega_\Gamma .
\]
Here ${\rm Conf}_{n,m}$ is the compactified configuration space of $n$ points in the upper-half plane and $m$ points on the real line, its dimension is given by formula ${\rm dim}({\rm Conf}_{n,m}) = 2n+m-2$, $\Gamma$ is a graph with $n+m$ vertices, the vertices of $\Gamma$ are in bijection $i \mapsto z_i$ with $n+m$ points of the configuration. The form $\Omega_\Gamma$ in the integrand is defined as a product of one-forms, one for each edge of $\Gamma$,
\begin{equation}  \label{eq:1}
 \Omega_\Gamma = \prod_{(i,j)\in E\Gamma}\,  \frac{1}{2\pi} \, d\arg\left( \frac{z_i-z_j}{\bar z_i-z_j}\right) .
\end{equation}
Hence, the degree of $\Omega_\Gamma$ is equal to $|E \Gamma|$, and the weights $\varpi_\Gamma$ are well-defined if $|E \Gamma| = 2n+m-2 = {\rm dim}({\rm Conf}_{n,m})$.

The statement that $\mU$ is an $L_\infty$-morphism translates into a set of quadratic identities 
\begin{equation}   \label{intro:quadratic}
\sum_i \,  \varpi_{\Gamma'_{i}} \varpi_{\Gamma''_{i}} =0
\end{equation}
for the weights. Here , $\Gamma$ is a graph with $|E\Gamma|= 2n+m -3$, $\Gamma'_i \subset \Gamma$ is a subgraph of $\Gamma$, and $\Gamma''_i$ is obtained from $\Gamma$ by contracting $\Gamma'_i$. It turns out that quadratic equations \eqref{intro:quadratic} can be obtained by applying the Stokes' Theorem to the form $\Omega_\Gamma$,
\begin{equation} \label{intro:stokes}
0=\int_{{\rm Conf}_{n,m}} d\Omega_\Gamma
 =
 \int_{\partial {\rm Conf}_{n,m}} \Omega_\Gamma
 =
 \sum_{i} \int_{\partial_i {\rm Conf}_{n,m}} \Omega_\Gamma .
\end{equation}
Here the boundary components $\partial_i {\rm Conf}_{n,m}$ factorize as products of configuration spaces with smaller $n,m$, and so does the integrand $\Omega_\Gamma$ when restricted to the boundary. Hence, the right-hand side can be re-written as
$$
 \sum_{i} \int_{\partial_i {\rm Conf}_{n,m}} \Omega_\Gamma = \sum_i \,  \varpi_{\Gamma'_{i}} \varpi_{\Gamma''_{i}} .
$$
 
In \cite[section 4.1]{K2}, M. Kontsevich stated/conjectured that one can construct another $L_\infty$-morphism between $\Tpoly M$ and $\Dpoly M$ by replacing the form \eqref{eq:1} with the {\em logarithmic form}
\begin{equation}  \label{eq:2}
 \Omega_\Gamma^{{\rm log}} = \prod_{(i,j)\in E\Gamma} \frac{1}{2\pi i} d\log\left( \frac{z_i-z_j}{\bar z_i-z_j}\right) .
\end{equation}
In the path integral approach of Cattaneo-Felder \cite{CF}, this will correspond to a different gauge choice for the Poisson $\sigma$-model. Referring to the physics language of Feynman calculus, one often refers to the 1-form factors on the right-hand side of \eqref{eq:2} as to {\em logarithmic propagators}.

The idea is elegant, and it should lead to an $L_\infty$-morphism with better number theoretic properties, but its realization encounters a number of technical difficulties. First of all, the logarithmic forms $\Omega_\Gamma^{{\rm log}}$ do not extend to compactified configuration spaces, in general. Hence, {\em a priori} it is not clear whether they give rise to well-defined {\em logarithmic weights}
\begin{equation}  \label{intro:weight}
 \varpi_\Gamma^{{\rm log}} = \int_{{\rm Conf}_{n,m}} \Omega_\Gamma^{{\rm log}} .
 \end{equation}
It is even more problematic to apply the Stokes' formula \eqref{intro:stokes} to forms with singularities on the boundary. 

The purpose of this paper is to prove M. Kontsevich's statement/conjecture on the existence of the logarithmic formality morphism. Our first result is Theorem \ref{thm:formregular} stating that for $|E\Gamma| = 2n+m-2$, the logarithmic forms $\Omega_\Gamma^{{\rm log}}$ extend to regular forms on compactified configuration spaces ${\rm Conf}_{n,m}$. Hence, the logarithmic weights \eqref{intro:weight} are actually well-defined.

In the case of $|E\Gamma| = 2n+m-3$ which is relevant for the Stokes' formula, the forms $\Omega_\Gamma^{{\rm log}}$ do possess singularities at the boundary of the configuration space. Our second result is Theorem \ref{thm:regStokes} which provides a version of the (regularized) Stokes' formula for differential forms with well controlled singularities suitable for our purposes. Then, Theorem \ref{thm:vanishingproperty} shows that logarithmic forms $\Omega_\Gamma^{{\rm log}}$ are exactly of this type and computes the boundary contributions in the regularized Stokes' formula. This result includes a version of the Kontsevich Vanishing Lemma for logarithmic weights. Finally, Theorem \ref{thm:final} states the $L_\infty$-property for the new formality morphism $\mU^{{\rm log}}$.

The main novelty of our approach comes from the observation that  configuration spaces carry local torus actions. For a set of points $z_1, \dots, z_k$ collapsing to their center of mass $\zeta$ in the upper half-plane, we define a circle action
$$
z_i \mapsto e^{i\theta} (z_i - \zeta) + \zeta
$$
for $i=1, \dots, k$. Other points of the configuration are not affected by this action. It is convenient to introduce a coordinate $r \geq 0$ near the boundary of the configuration space such that 
$$
z_i = \zeta + r z_i^{(1)},
$$
where the new coordinates $z_i^{(1)}$ are normalized in such a way that $\sum_i z_i^{(1)}=0, \sum_i |z_i^{(1)}|^2 =1$. We show that for $r$ small the logarithmic forms admit the following decomposition
$$
\Omega_\Gamma^{{\rm log}} = \frac{dr}{r} \wedge \alpha + {\rm terms \,\, regular\,\, in \,\,} r,
$$
where $\alpha$ is basic for the $S^1$-action. In particular, one can define the {\em regularization} of the logarithmic form at the boundary
$$
{\rm Reg}(\Omega_\Gamma^{{\rm log}}) = \left( \Omega_\Gamma^{{\rm log}} - \frac{dr}{r} \wedge \alpha \right)_{r=0} .
$$
These forms enter  the regularized Stokes' formula
$$
\int_{{\rm Conf}_{n,m}} d \Omega_\Gamma^{{\rm log}} = \sum_i \, \int_{\partial_i {\rm Conf}_{n,m}} \, {\rm Reg}_i (\Omega_\Gamma^{{\rm log}}) .
$$
which implies the quadratic relation needed to establish the $L_\infty$-property of $\mU^{{\rm log}}$.

Our result has already been used in the literature. In \cite{carloexplicit} the characteristic class of the star product defined by the formality morphism $\mU^{{\rm log}}$ is computed, and in the forthcoming paper \cite{carlothomas} our methods are used to construct a family of Drinfeld associators interpolating between the Knizhnik-Zamolodchikov and the Alekseev-Torossian associators.

The structure of the paper is as follows. In Section \ref{sec:Stokes}, we prove a version of the Stokes' formula for differential forms with singularities. In Section \ref{sec:Config}, we introduce special charts and study local torus actions on configuration spaces. In Section \ref{sec:log}, we consider logarithmic forms and show that they satisfy the assumptions of the regularized Stokes' formula. In Section \ref{sec:final}, we show that this Stokes' formula leads to a new $L_\infty$-morphism $\mU^{{\rm log}}$ and show that $\mU^{{\rm log}}$ admits a globalization to arbitrary smooth manifolds $M$.

\vskip 0.3cm

{\bf Authorship.} This project required an effort of several researchers. It was started by C.T. who came up with a version of Theorem \ref{thm:formregular}.  This work was continued by C.R who understood the importance of the regularization procedure in Stokes' formula and proved the globalization property of $\mU^{{\rm log}}$. The idea of using the local torus actions is due to T.W. The present version of the manuscript has been written up by A.A. and T.W.

\vskip 0.3cm

{\bf Acknowledgements.} We are grateful to G. Felder and S. Merkulov for inspiring discussions and for the interest in our work.
We would like to thank J. L\"offer who participated in the earlier stages of the project.
The work of A.A. was supported in part by the grant MODFLAT of the European Research Council (ERC) and by the grants
140985 and 141329 of the Swiss National Science Foundation.
T.W. was partially supported by the Swiss National Science Foundation, grant 200021\_150012.
Research of A.A. and T.W. was supported in part by the NCCR SwissMAP of the Swiss National Science Foundation.

\section{Regularized Stokes formula}   \label{sec:Stokes}

In this section, we prove a version of the Stokes' formula suitable for differential forms with a certain (well controlled) type of singularities. In view of applications to configuration spaces, we use the framework of manifolds with corners.

Let $K$ be a compact manifold with corners of dimension $n$ covered by a system of charts $U_i$
locally  diffeomorphic to open subsets in $\R_{\geq 0}^k \times \R^{n-k}$. We assume that 
the charts are labeled by the partially ordered set $I$ such that 

\begin{itemize}
\item
$U_i \cap U_j =\emptyset$ unless $i\geq j$ or $j\geq i$.
%\subset U_{\ocd(i,j)}$, where $\ocd(i,j)$ is the oldest common descendant of $i$ and $j$ under the partial order. In particular, if $\ocd(i,j)$ does not exist, then $U_i \cap U_j = \emptyset$.
\item
$U_i$ carries a free action of a torus $T_i$ preserving all components of the boundary. If $i>j$ under the partial order, one has a natural injective group homomorphism $T_i \hookrightarrow T_j$ such that the inclusions $ U_i \cap U_j \hookrightarrow U_i, U_i \cap U_j \hookrightarrow U_j$ are $T_i$-equivariant. 
\end{itemize}

Furthermore, we assume that $K$ admits a partition of unity $\rho_i$ subordinate to the atlas $\{ U_i\}$ such that each function $\rho_i$ is invariant under the action of the torus $T_i$.

For each $i$,  we denote by $v_{i,a}$ the fundamental vector fields of the circles $S^1_{i,a} \subset T_i$ for $a = 1, \dots , {\rm dim}(T_i)$.
For future use it is convenient to introduce multi-vector fields  
\[
 \xi_i = \bigwedge_{a=1}^{\mathrm{dim} T_i} v_{i,a} .
\]
In particular, for $T_i$ trivial we have $\xi_i=1$.

\begin{defi}
For $\xi$ a multi-vector field and $\omega$ a differential form, we say that $\omega$ is $\xi$-basic if $\iota_\xi\omega=0$ and $\iota_\xi d\omega=0$. 
\end{defi}
For the 0-vector field $\xi=1$ every $\xi$-basic form vanishes.
If $\xi$ is a vector field generating an action of the group $\R$ (or $S^1$), we recover the notion of a basic form with
respect to this action. Indeed, we have $\iota_\xi \omega=0$ and 
$L_\xi \omega = d \iota_\xi \omega + \iota_\xi d \omega =0$.
For any multi-vector field  $\xi$, the $\xi$-basic forms form a subcomplex under the de Rham differential.

\begin{rem}
Let $\xi$ and $\xi'$ be two multi-vector fields and $\omega$ be a $\xi$-basic differential form. Then, it is also basic
with respect to the multi-vector field $\xi \wedge \xi'$. Indeed, 
$\iota_{\xi \wedge \xi'} \omega = \pm \iota_{\xi'} \iota_\xi \omega =0 $, and the same argument applies to 
$\iota_{\xi \wedge \xi'} d \omega$.

\end{rem}

%\begin{ex}
% If a form $\omega$ defined on $U_j$ is basic under any of the $\xi_j^a$, then it is $\xi_j$-basic. Moreover, if a form $\omega$ is $\xi$-basic, for $\xi$ some multivector field, and if $\xi'$ is any other multivector field, then the form is also $\xi\wedge \xi'$-basic.
%\end{ex}

\begin{defi}
 Let $\omega$ be a degree top-1 differential form on $K^\circ$. We say that $\omega$ is \emph{regularizable} if for every $j$ there is a $\xi_j$-basic form $\alpha_j$ (the \emph{counterterm}) defined on $U_j$ such that $\omega-\alpha_j$ is regular on the boundary $\p K\cap U_j$.
%\footnote{In the special case $T_j=\emptyset$ we require that $\omega$ is already regular on the boundary, i.~e., we define a $\emptyset$-equivariant form to be zero. That is why I chose the symbol $\emptyset$ instead of $1$....}
The \emph{regularization} ${\rm Reg}(\omega)$ of $\omega$ is the top-degree form on the boundary $\p K$ defined by
\[
{\rm Reg}(\omega)\mid_{\p K\cap U_j}=(\omega-\alpha_j)\mid_{\p K\cap U_j}.
\]
\end{defi}

\begin{prop}
 The regularisation is well-defined, i.~e. it does not depend on the chart and on the choice of counterterms.
 % and on the intersections $\p K\cap U_j\cap U_k$ the right hand sides of the above formula for $j$ and $k$ agree. 
Furthermore, the form $\iota_{\xi_j}\omega$ is regular on the boundary $\p K \cap U_j$ and 
\[
{\rm Reg}(\omega)\mid_{\p K\cap U_j} = \eta_j \wedge (\iota_{\xi_j}\omega)\mid_{\p K\cap U_j},
\]
where $\eta_j$ is a $(\mathrm{dim} T_j)$-form such that $\iota_{\xi_j}\eta_j=1$.
\end{prop}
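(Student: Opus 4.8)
The plan is to reduce all three assertions to two purely local facts about the free torus action near a single codimension-one boundary face, after which everything is formal. Fix a chart $U_j$ and a codimension-one face $F=\p K\cap U_j$. Since $T_j$ acts freely and preserves the boundary, the $v_{j,a}$ are tangent to $F$ and pointwise linearly independent, so $\xi_j$ restricts to a nowhere-zero decomposable $(\mathrm{dim}\,T_j)$-vector field on $F$ and an $\eta_j$ with $\iota_{\xi_j}\eta_j=1$ exists. The first fact is that contraction with a multi-vector field tangent to $F$ commutes with pullback along $\iota\colon F\hookrightarrow K$, that is $\iota^*\iota_{\xi_j}=\iota_{\xi_j}\iota^*$; this is immediate from $\iota_* v_{j,a}=v_{j,a}$. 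The second fact is the pointwise identity $\mu=\eta_j\wedge\iota_{\xi_j}\mu$, valid for every top-degree form $\mu$ on $F$: completing the factors of $\xi_j$ to a local coframe one checks that both sides agree on the unique top component, the normalization $\iota_{\xi_j}\eta_j=1$ fixing the constant to be $1$.

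These two facts give at once the vanishing lemma: if $\beta$ is regular and $\xi_j$-basic of degree $\tops-1$, then $\iota^*\beta=0$. Indeed $\mu:=\iota^*\beta$ is top-degree on $F$ and $\iota_{\xi_j}\mu=\iota^*(\iota_{\xi_j}\beta)=0$, whence $\mu=\eta_j\wedge\iota_{\xi_j}\mu=0$. Independence of the counterterm now follows: if $\alpha_j,\alpha_j'$ are two admissible counterterms on $U_j$, then $\alpha_j-\alpha_j'=(\omega-\alpha_j')-(\omega-\alpha_j)$ is regular on $F$ and $\xi_j$-basic (basic forms form a subspace), hence restricts to $0$, so $(\omega-\alpha_j)\mid_F=(\omega-\alpha_j')\mid_F$.

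For chart independence it suffices to treat comparable charts, since otherwise $U_i\cap U_j=\emptyset$. Assume $i>j$, so $T_i\hookrightarrow T_j$; by equivariance of the overlap inclusions the fundamental fields of $T_i$ lie pointwise in the span of those of $T_j$, hence $\xi_j=\xi_i\wedge\xi'$ up to a nonvanishing scalar for a complementary multi-vector $\xi'$. By the Remark, $\alpha_i$ being $\xi_i$-basic is then also $\xi_j$-basic, so $\alpha_i-\alpha_j$ is regular and $\xi_j$-basic on $U_i\cap U_j\cap\p K$ and the vanishing lemma gives $(\omega-\alpha_i)\mid_{\p K}=(\omega-\alpha_j)\mid_{\p K}$ there. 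Thus the face-wise regularizations patch to a well-defined top form $\mathrm{Reg}(\omega)$ on $\p K$.

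It remains to prove regularity of $\iota_{\xi_j}\omega$ and the formula. As $\iota_{\xi_j}\alpha_j=0$, we get $\iota_{\xi_j}\omega=\iota_{\xi_j}(\omega-\alpha_j)$, a contraction of a regular form by a smooth field, hence regular on $F$. Setting $\mu=\mathrm{Reg}(\omega)\mid_F=\iota^*(\omega-\alpha_j)$, the first fact together with $\iota_{\xi_j}\alpha_j=0$ gives $\iota_{\xi_j}\mu=\iota^*(\iota_{\xi_j}\omega)=(\iota_{\xi_j}\omega)\mid_F$, and the second fact yields $\mu=\eta_j\wedge\iota_{\xi_j}\mu=\eta_j\wedge(\iota_{\xi_j}\omega)\mid_F$; in particular this exhibits the answer as independent of the choice of $\eta_j$. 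The only genuine work is in verifying the two local facts with correct signs and in justifying the decomposition $\xi_j=\xi_i\wedge\xi'$ from the equivariant inclusions; once the vanishing lemma is in hand the remaining steps are formal bookkeeping.
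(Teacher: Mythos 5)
Your proof is correct and takes essentially the same route as the paper's: both arguments rest on the facts that contraction with $\xi_j$ kills the ($\xi_j$-basic) counterterms, that $\eta_j\wedge\iota_{\xi_j}$ acts as the identity on top-degree forms along the boundary, that regularity lets these identities pass continuously to the boundary, and on the decomposition $\xi_j=\xi_i\wedge\xi'$ together with the Remark for comparing overlapping charts. Your ``vanishing lemma'' for regular $\xi_j$-basic forms of degree top$-1$ is just a repackaging, applied to differences of counterterms, of the computation the paper performs directly.
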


%The statement of the proposition may be rephrased as saying that \emph{regularization is contraction with $\xi_j$}.

\begin{proof}
Let $U_j$ and $U_k$ be two overlapping charts. 
Since $U_j \cap U_k =\emptyset$ unless $j\geq k$ or $j\leq k$, it suffices to show the statement for $j \geq k$. Then, $T_j \subset T_k$ and $\xi_k = \xi_j \wedge \xi'$ for some multi-vector field $\xi'$. Let $\eta_k$ be a differential form on $U_k$ such that $\iota_{\xi_k} \eta_k = 1$ and $\alpha_j, \alpha_k$ be counterterms on the charts $U_j$ and $U_k$, respectively. Then, on the interior of the overlap $U_j \cap U_k$ we have 
%
%Suppose that $\alpha$ is another counterterm which is $\xi$-basic, where $\xi$ is a non-vanishing multivector field on $U_j$ of which $\xi_j$ is a factor, transverse to the boundary. Let $\eta$ be a differential form such that $\iota_\xi\eta=1$. Then in the interior of $U_j$ we have that 
\[
\eta_k \wedge \iota_{\xi_k}(\omega-\alpha_j)=\eta_k \wedge \iota_{\xi_k}(\omega-\alpha_k)
\]
since $\iota_{\xi_k}\alpha_j = \iota_{\xi_k}\alpha_k=0$.
But the left-hand side and the right-hand side extend continuously to the boundary, so the identity extends to the boundary as well. On the boundary, the operator $\eta_k \wedge \iota_{\xi_k}$ is the identity on top degree forms, and we obtain
\[ 
(\omega - \alpha_j)\mid_{\p K \cap U_j \cap U_k} = (\omega -\alpha_k)\mid_{\p K \cap U_j \cap U_k} .
\]
Hence, regularizations on different charts agree. The same argument applies to a comparison of two different counterterms on the same chart.

For the second statement, we use  $\iota_{\xi_j} \alpha_j=0$ to show that 
$\iota_{\xi_j} \omega = \iota_{\xi_j} (\omega - \alpha_j)$ in the interior of $U_j$. Hence, $\iota_{\xi_j} \omega$ is regular at the boundary
$\p K \cap U_j$. Using the definition of the regularization, we obtain
\[ {\rm Reg}(\omega)\mid_{\p K \cap U_j} = (\omega -\alpha_j)\mid_{\p K \cap U_j} =
\eta_j \wedge \iota_{\xi_j} (\omega - \alpha_j)\mid_{\p K \cap U_j} =
\eta_j \wedge \iota_{\xi_j} \omega\mid_{\p K \cap U_j},
\]
as required.

%see that the regularization does not depend on the choice of counterterm. 
%The same argument shows that the two regularization on the intersections $U_j\cap U_k$ agree, just take $\alpha=\alpha_k$ and for $\xi$ a multivector field that has both $\xi_j$ and $\xi_k$ as factors.

%The last statement in the proposition follows again since $\iota_{\xi_j}\alpha_j=0$.
% 
% This also shows that the regularization does not depend on the counter-term chosen.
% Next let us check that the regularizations agree on the intersection $U_j\cap U_k$.
% We may assume w.l.o.g. that in the partial ordering we discussed $k<j$, so $U_k$ is adjacent to a higher codimension stratum.
% Then compute
% \[
%  (\omega-\alpha_k)\mid_{\p K\cap U_j\cap U_k} 
% =
%  \eta_k \wedge (\iota_{\xi_k}\omega)\mid_{\p K\cap U_j\cap U_k}
% =
%  \eta_k \wedge (\iota_{\xi_k}(\omega-\alpha_j))\mid_{\p K\cap U_j\cap U_k}
% =
% (\omega-\alpha_j)\mid_{\p K\cap U_j\cap U_k}.
% \]
% Here we used again that $\eta_k \iota_{\xi_k}$ is the identity operator on top forms on the boundary and that since $\alpha_j$ is $\xi_j$-basic and $k<j$ it is a fortiori $\xi_k$-basic.
\end{proof}

\begin{thm}[Regularized Stokes' Theorem]\label{thm:regStokes}
 Let $\omega$ be a regularizable top-1 degree form on $K$. Then, the differential form $d\omega$ is regular on $K$ and
\[
 \int_K d\omega = \int_{\p K} {\rm Reg}(\omega).
\]
\end{thm}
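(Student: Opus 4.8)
The plan is to combine the projector identity established in the Proposition (namely that $\eta_j\wedge\iota_{\xi_j}$ is the identity on top-degree forms) with an exhaustion of $K^\circ$ by compact regions obtained by pushing every boundary face inward by a small amount $\epsilon$, so that the singular counterterms drop out of the boundary integral while only the regularized parts survive in the limit $\epsilon\to 0$.

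\emph{Regularity of $d\omega$.} Working in a chart $U_j$, I would write $d\omega=d(\omega-\alpha_j)+d\alpha_j$. Since $\omega-\alpha_j$ is regular on $\p K\cap U_j$, so is its differential, and hence $\iota_{\xi_j}d(\omega-\alpha_j)$ is regular. Because $\alpha_j$ is $\xi_j$-basic we have $\iota_{\xi_j}d\alpha_j=0$, so $\iota_{\xi_j}d\omega=\iota_{\xi_j}d(\omega-\alpha_j)$ is regular at the boundary. As $d\omega$ is top-degree, the identity $d\omega=\eta_j\wedge\iota_{\xi_j}d\omega$ from the proof of the Proposition (with $\eta_j$ chosen regular, e.g. dual to the nowhere-vanishing fundamental fields of the free $T_j$-action) exhibits $d\omega$ as a wedge of two regular forms, hence regular on $U_j$. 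Since $d\omega$ is smooth on $K^\circ$, it is regular on all of $K$, and in particular integrable.

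\emph{Exhaustion, Stokes, and vanishing of counterterms.} Using $T_j$-invariant boundary defining functions in each $U_j$, glued compatibly via the partial order and the inclusions $T_i\hookrightarrow T_j$, I would introduce $K_\epsilon\subset K^\circ$ obtained by replacing each face $\{r=0\}$ with $\{r=\epsilon\}$. On $K_\epsilon$ the form $\omega$ is smooth, so ordinary Stokes gives $\int_{K_\epsilon}d\omega=\int_{\p K_\epsilon}\omega$; by the regularity of $d\omega$ and dominated convergence the left-hand side tends to $\int_K d\omega$. I would then localize the right-hand side with the invariant partition of unity, writing $\int_{\p K_\epsilon}\omega=\sum_i\int_{\p K_\epsilon\cap U_i}\rho_i\big((\omega-\alpha_i)+\alpha_i\big)$. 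On each face $\{r=\epsilon\}$ the torus $T_i$ is tangent, and $\iota_{\xi_i}(\rho_i\alpha_i)=\rho_i\,\iota_{\xi_i}\alpha_i=0$ since $\rho_i$ is $T_i$-invariant and $\alpha_i$ is $\xi_i$-basic; but a top-degree form on that face annihilated by contraction with the nowhere-zero multivector $\xi_i$ pulls back to zero, so the counterterm contributes nothing, exactly, for every $\epsilon$. The surviving piece $\rho_i(\omega-\alpha_i)$ is regular, so its integral over $\{r=\epsilon\}$ converges to its integral over $\{r=0\}$; summing over the faces in $U_i$ and over $i$, and using $\sum_i\rho_i=1$ together with the well-definedness of ${\rm Reg}$ (so that $\rho_i(\omega-\alpha_i)|_{\p K}=\rho_i\,{\rm Reg}(\omega)$), yields $\int_{\p K}{\rm Reg}(\omega)$.

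\emph{Main obstacle.} I expect the delicate point to be the geometric bookkeeping for the manifold with corners: one must build the inward-pushed regions $K_\epsilon$ from genuinely $T_i$-invariant defining functions that are mutually compatible where charts overlap and where several faces meet at a corner—this is precisely what the partial order and the nested tori $T_i\hookrightarrow T_j$ are designed to guarantee—and one must verify that the junctions between the faces of $\p K_\epsilon$ at scale $\epsilon$ leave no residual contribution in the limit. By contrast, the two analytic ingredients are comparatively robust: the singular counterterms vanish exactly at each $\epsilon$, and the convergence of the regular parts is routine once $d\omega$ is known to be regular.
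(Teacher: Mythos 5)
Your proof of the regularity of $d\omega$ is essentially the paper's own argument, and your key vanishing mechanism (a form of top degree on a given manifold, annihilated by contraction with a nowhere-vanishing decomposable multivector tangent to it, must vanish) is also the mechanism the paper uses. The genuine gap is in the step you relegate to ``geometric bookkeeping'': the equivariant exhaustion $K_\epsilon$ is asserted, not constructed, and it is \emph{not} guaranteed by ``the partial order and the nested tori''. The problem is that invariance propagates the wrong way along the inclusions: by hypothesis $i>j$ gives $T_i\hookrightarrow T_j$, so charts lower in the partial order carry the \emph{larger} tori. If you glue local invariant defining functions by the partition of unity, say $F=\sum_j\rho_j r_j$ with $r_j$ a $T_j$-invariant defining function on $U_j$, then near a point $p\in\supp\rho_i\cap\supp\rho_j$ with $j>i$ the term $\rho_j r_j$ is only $T_j$-invariant, and $T_j\subsetneq T_i$, so $F$ need not be $T_i$-invariant there and $\xi_i$ need not be tangent to $\{F=\epsilon\}$. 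Such points do occur near $\p K$ (e.g.\ near a stratum of the shallower chart $U_j$ where $\rho_i$ has not yet died off), they lie in $\supp\rho_i$, and exactly there your argument that $\int_{\p K_\epsilon}\rho_i\alpha_i=0$ breaks down; worse, $\alpha_i$ is singular at $\p K$, so the uncontrolled term can a priori blow up as $\epsilon\to 0$. The gap is fixable, but only by an actual construction: propagate defining functions outward from the deepest charts by a recursion over the poset (a $T_i$-invariant function is automatically $T_j$-invariant for every $j>i$ since $T_j\subset T_i$), placing every gluing collar inside $U_i\setminus\supp\rho_i$. That is work of the same kind as the paper's construction of the invariant partition of unity, and it must be done, not invoked.

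The comparison with the paper's proof is instructive, because it shows the exhaustion is unnecessary. Instead of cutting off at $\epsilon$ and taking limits, the paper multiplies by $\rho_j$ \emph{first}: $\rho_j(\omega-\alpha_j)$ is regular up to $\p K$ and compactly supported in $U_j$, so ordinary Stokes for manifolds with corners applies to it directly,
\[
\int_{\p K\cap U_j}\rho_j(\omega-\alpha_j)=\int_{U_j}d\bigl(\rho_j(\omega-\alpha_j)\bigr),
\]
and the counterterm is removed in the bulk rather than on the boundary, via
\[
d(\rho_j\alpha_j)=\eta_j\wedge\iota_{\xi_j}\bigl(d\rho_j\wedge\alpha_j+\rho_j\,d\alpha_j\bigr)
=\eta_j\wedge\bigl(\pm\, d\rho_j\wedge\iota_{\xi_j}\alpha_j+\rho_j\,\iota_{\xi_j}d\alpha_j\bigr)=0 .
\]
This is precisely your top-degree-plus-basic vanishing argument, but applied on $U_j$ itself, where the free $T_j$-action and the $T_j$-invariance of $\rho_j$ are hypotheses, so no tangency to auxiliary hypersurfaces has to be arranged; summing $\int_{U_j}d(\rho_j\omega)$ over $j$ and using $\sum_j\rho_j=1$ gives $\int_K d\omega$ with no limiting procedure and no convergence bookkeeping at corners.
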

\begin{proof}
In the interior of $U_j$, $d\omega$ is a top degree form and we have 
\[
 d\omega = \eta_j\wedge \iota_{\xi_j} d\omega =
\eta_j\wedge \iota_{\xi_j} d(\omega-\alpha_j)
=
d(\omega-\alpha_j),
\]
where we have used that $\iota_{\xi_j} d \alpha_j =0$. Since the form $\omega - \alpha_j$ is regular
on the boundary, so is its differential $d(\omega - \alpha_j)$. Hence, $d\omega$ is regular on the 
boundary which proves the first statement.

We will now prove the Stokes' formula from right to left.
\begin{align*}
\int_{\p K}{\rm Reg}(\omega) &=
\sum_j \, \int_{\p K}\rho_j {\rm Reg}(\omega)
= 
\sum_j \, \int_{\p K\cap U_j}\rho_j {\rm Reg}(\omega)
= 
\sum_j \, \int_{\p K\cap U_j}\rho_j (\omega-\alpha_j)
\\
&=
\sum_j \int_{U_j}d(\rho_j (\omega-\alpha_j))
=
\sum_j \int_{U_j}d(\rho_j \omega)
=
 \int_{K}d\big( \big(\sum_j\rho_j\big) \omega\big)
=
\int_{K}d \omega
\end{align*}
Here we used that $\rho_j\alpha_j$ is $\xi_j$-basic to conclude that
\[
d(\rho_j \alpha_j)
=
\eta_j\wedge \iota_{\xi_j} (d\rho_j \wedge \alpha_j+\rho_j d\alpha_j)
=
\eta_j\wedge (\pm d\rho_j \wedge \iota_{\xi_j}\alpha_j+\rho_j \iota_{\xi_j}d\alpha_j)
=0.
\]
This implies that $d(\rho_j \omega)$ is regular. Since $d\omega$ is regular, both $\rho_j d\omega$ and $d\rho_j \wedge \omega$ are regular. Hence, all forms involved in the Stokes' formula argument are regular.
\end{proof}

\begin{rem}
In our setting the form ${\rm Reg}(\omega)$ will be $T_i$-invariant when restricted to $\p_i K = \p K \cap U_i$. In this case, the integral over the $i$-th boundary stratum (of codimenson 1) may be written as 
\[
\int_{\p_i K} {\rm Reg}(\omega) =\int_{\p_i K/T_i} \iota_{\xi_i}\omega,
\]
where we have used the normalization ${\rm Vol}(T_i) =1$.
\end{rem}

\section{Configuration spaces}   \label{sec:Config}

In this Section, we show that compactified configuration spaces of points in the upper half-plane satisfy the assumptions of the previous section. We will start by discussing the compactified configuration spaces of points in $\C$.

\subsection{Nested families}
Let $A$ be a finite set, and let $\mathbb{C}^A$ be the space of injections $z: A \to \C$ such that $z_a \neq z_b$ if $a \neq b$. Recall that the group $G_4= \C^* \ltimes \C$ acts freely on $\C^A$ if $|A| \geq 2$. This action is given by formula
$$
z_a \mapsto u z_a + v
$$
for all $a \in A$. The quotient ${\rm FM}_A = \C^A/G_4$ is a complex space of dimension ${\rm dim}_\C \, {\rm FM}(A) = |A| -2$. 

The action of $G_4$ restricts to a free action of its real subgroup $G_3 = \R_+ \ltimes \C$, where $u \in \R_+$ and $v \in \C$. The quotient  ${\rm Conf}^{{\rm open}}_A = \C^A/G_3$ is a real manifold of dimension ${\rm dim}_\R \, {\rm Conf}^{{\rm open}}_A=2|A|-3$. It is a principal $G_4/G_3 \cong S^1$-bundle over ${\rm FM}_A$. 
%Let $n: A \to \mathbb{N}$ be a set of positive integer weights assigned to elements of $A$. 
For every element $[z] \in {\rm Conf}^{{\rm open}}_A$ there is a unique representative $z \in \C^A$ such that
$$
\sum_{a \in A} z_a = 0 \hskip 0.3cm, \hskip 0.3cm
\sum_{a \in A} |z_a|^2 = 1.
$$
These equations define an ellipsoid $\mathcal{E}_{A,}$ in the space $\C^{|A|} = \R^{2|A|}$ and ${\rm Conf}^{{\rm open}}_A$ is identified with an open dense subset of this ellipsoid.

For any $A' \subset A$ there is a natural projection ${\rm Conf}^{{\rm open}}_A \to {\rm Conf}^{{\rm open}}_{A'}$ (by forgetting the points corresponding to elements outside $A'$). As a consequence, there is a natural map ${\rm Conf}^{{\rm open}}_A \to \mathcal{E}_{A'}$.
% where $n': A' \to \mathbb{N}$ is the restriction of the weight function $n$ to $A'$. 
Hence, one obtains a map
$$
{\rm Conf}^{{\rm open}}_A \to \prod_{A' \subset A} \, \mathcal{E}_{A'},
$$
where $A'$ ranges over of all subsets of $A$ with $|A'| \geq 2$. This embedding defines the Kontesvich compactification of the configuration space denoted by ${\rm Conf}_A$. This compactification 
%is independent of the weight function $n$ and it 
comes equipped with natural projections ${\rm Conf}_A \to {\rm Conf}_{A'}$ for all $A' \subset A$ of cardinality at least 2.

We will consider families of subsets $i \in 2^A$ such that $A \in i$ and $\{a\}\in i$ for all $a\in A$. We say that such a family is nested 
if for any two subsets $B, C \in i$ one of the following three options is realized: $B \cap C = \emptyset$, $B \subset C$ or $C \subset B$. Such a nested family corresponds to a rooted tree (denoted by the same letter) with the root $A$, and with direct descendants (children) of a set $B \in i$
given by the subsets $C \subset B$ such that $C \in i$ and there is no $D \in i$ such that $C \subset D \subset B$. The leafs of this tree are one-element subsets, i.~e. sets of the form $\{a\}$.
We denote the direct ancestor (the parent) of a set $B$ in the tree by $p(B)$ and the set of children of $B$ by $\gstar(B)$. The inclusion relation
defines a partial order on the elements of $i$, and we have $B \cap C = \emptyset$ for elements $B$ and $C$ which are not comparable 
with respect to this partial order. For two sets $B, C \in i$ we denote by ${\rm lca}(B,C)$ their least common ancestor in the corresponding tree. 

There is in turn a partial order on the nested families of subsets of $A$, given by the set theoretic inclusion. For nested families $i$ and $j$ we will say that $i\geq j$ if $i\subset j$. The largest nested family in this ordering (or smallest set theoretically) is the set $\tops$ consisting of $A$ and all $\{a\}\in A$. For two nested families $i$ and $j$, one can ask for the greatest common descendant ${\rm gcd}(i,j)$ under the partial order. A priori, the existence of ${\rm gcd}(i,j)$ is not guaranteed. If $i \cup j$ is a nested family, then ${\rm gcd}(i,j)= i \cup j$. And if $i \cup j$ is not a nested family, then ${\rm gcd}(i,j)$ does not exist.
%{\color{red} Different from previous convention!}

For $i$ a nested family, we will set $\underline i=i\setminus (\{A\} \cup \{\{a\}\mid a\in A\})$ and $|i|:=|\underline i|$ abusing notation.

\subsection{Charts on configuration spaces}
For each nested family $i$ and a sufficiently small constant $c$, we introduce a subset $V_i^c$ of ${\rm Conf}_A$ which is defined as follows. 
If $i=\tops$ we set $V_i^c={\rm Conf}_A$ irrespective of $c$.
Otherwise, let $B \in i$ and denote
$$
\zeta_B = 
\frac{1}{|B|} \, \sum_{b \in B} \, z_b
%\frac{\sum_{b \in B} n_b z_b}{\sum_{b\in B} n_b}
$$
the coordinate of its center of mass.
In particular, $\zeta_{\{b\}}=z_b$. Then, a configuration $z \in \C^A$ is in $V_i^c$ if for every $B \in i$ we have
\begin{equation}   \label{eq:1/N}
\frac{|\zeta_C - \zeta_B|}{|\zeta_D - \zeta_B|} \leq c
\end{equation}
for all $C \in {\rm star}(B)$ and for all $D \in {\rm star}(p(B))\setminus\{B\}$.
It is clear that if $c<c'$ then $V_i^c\subset V_i^{c'}$.

Note that the left-hand side of \eqref{eq:1/N} is invariant under the action of $G_3$. Hence, the inequality still makes sense on the open configuration space ${\rm Conf}^{{\rm open}}_A$.  It extends in a natural way to the compactified configuration space ${\rm Conf}_A$. By abuse of notation, we denote the corresponding subsets of ${\rm Conf}_A$ by $V_i^c$ as well.

\begin{lemma}\label{lem:Vprops}
The sets $V_i^c$ satisfy the following properties:
\begin{enumerate}
\item Together, they cover ${\rm Conf}_A$.
\item For each $c>0$: $\partial_i {\rm Conf}_A \subset V_i^c$, where $\partial_i {\rm Conf}_A$ is the boundary stratum of the compactified configuration space corresponding to the nested set $i$.
\item\label{lemitem:Vprop3} There is a constant $c'$, tending to 0 as $c\to 0$, such that for all $B,C, D\in i$ with $\lca(B,C)<\lca(B,D)$:
\[
 \frac{|\zeta_C - \zeta_B|}{|\zeta_D - \zeta_B|} \leq c'
\]
for any configuration in $V_i^c$. 
%Here $\lca(\dots)$ denotes the least common ancestor in the tree corresponding to $i$.
%\item For $c$ sufficiently small, the sets $V_i^c$ carry a natural free action of a torus group $T_i$, acting by rotating the sets $B\in i$ with $2\leq |B|< |A|$ around their respective centers of mass $\zeta_B$.
\item\label{lemitem:Vprop4} For each $c$ sufficiently small there is a constant $c''$ tending to 0 as $c\to 0$ such that 
\[
V_i^c \cap V_j^c \subset  
V_{\gcd(i,j)}^{c''} .
\]
%where ${\rm gcd}(i,j)$ is the greatest common descendant of $i$ and $j$ (that is, the smallest nested family containing both $i$ and $j$). 
In case ${\rm gcd}(i,j)$ does not exist, the right-hand side is understood as the empty set.

\end{enumerate} 
\end{lemma}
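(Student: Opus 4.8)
The plan is to prove each of the four properties of the sets $V_i^c$ in turn, working directly from the defining inequality \eqref{eq:1/N} and the combinatorics of nested families. Properties (1) and (2) are the structural statements, while (3) and (4) are the quantitative estimates that will do the real work later (property (4) is what furnishes the "$U_i\cap U_j=\emptyset$ unless $i\geq j$ or $j\geq i$" type control needed for the charts to satisfy the hypotheses of Section~\ref{sec:Stokes}). I expect the genuine difficulty to lie in property \eqref{lemitem:Vprop4}, so I would organize the argument to build the needed estimates incrementally.

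\emph{Property (2) and (1).} First I would establish (2), which is essentially definitional once one unwinds what the boundary stratum $\partial_i{\rm Conf}_A$ is: on $\partial_i{\rm Conf}_A$ each set $B\in i$ has genuinely collapsed, so the ratios $|\zeta_C-\zeta_B|/|\zeta_D-\zeta_B|$ in \eqref{eq:1/N} tend to $0$ (the numerator involves points internal to $B$, which have collapsed relative to $B$'s parent, while the denominator stays bounded below). Hence the inequality holds with strict inequality for \emph{any} $c>0$, giving $\partial_i{\rm Conf}_A\subset V_i^c$. For (1), I would argue by contradiction or by a direct greedy construction: given any configuration $z$, build the finest nested family $i$ whose collapsing pattern matches the actual clustering of $z$ at the relevant scales, and check that $z\in V_i^c$ for $c$ chosen uniformly. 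Since ${\rm Conf}_A$ is compact and the $V_i^c$ are open neighborhoods of the strata they contain, a compactness argument then upgrades pointwise covering to a genuine finite open cover.

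\emph{Property (3).} This is the key scale-separation estimate and should be proved before (4), since (4) will use it. The hypothesis $\lca(B,C)<\lca(B,D)$ means that $C$ branches off from $B$ \emph{deeper} in the tree than $D$ does, so the cluster containing $B$ and $C$ sits strictly inside the cluster containing $B$ and $D$. I would prove this by iterating \eqref{eq:1/N} along the chain of ancestors from $\lca(B,C)$ up to $\lca(B,D)$: each step of \eqref{eq:1/N} contributes a factor bounded by $c$, and a telescoping/triangle-inequality argument on the $\zeta$'s converts the comparison of ratios at different tree-levels into a product of such factors. The resulting bound is $c'=c'(c)$ with $c'\to0$ as $c\to0$. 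The slightly delicate point is handling the triangle inequalities so that the denominators stay controlled; this is where I would spend care, using that $\zeta_D-\zeta_B$ is comparable to $\zeta_D-\zeta_{\lca(B,D)}$ up to lower-order corrections controlled by (3) applied at a shallower level (a bootstrapping that terminates because the tree is finite).

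\emph{Property (4) --- the main obstacle.} Here I must show that if $z\in V_i^c\cap V_j^c$ then $i\cup j$ is nested (otherwise the intersection is empty) and $z\in V_{\gcd(i,j)}^{c''}$. The hard part is the \textbf{emptiness claim}: if $i\cup j$ is \emph{not} nested, there exist $B\in i$, $B'\in j$ with $B\cap B'\neq\emptyset$ but neither contained in the other; I would derive a contradiction by showing that membership in $V_i^c$ forces the points of $B$ to cluster at one scale while membership in $V_j^c$ forces an incompatible clustering at an overlapping scale, the two requirements becoming mutually exclusive once $c$ is small enough. Concretely, the crossing sets $B,B'$ produce two chains of ratio inequalities from \eqref{eq:1/N} that, combined via property (3), force some ratio to be simultaneously $\leq c'$ and $\geq 1/c'$, impossible for small $c$. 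When $i\cup j$ \emph{is} nested, $\gcd(i,j)=i\cup j$ and I must verify \eqref{eq:1/N} for $\gcd(i,j)$ with a slightly worse constant $c''$: each inequality of $\gcd(i,j)$ either already appears in $i$ or in $j$, or involves sets whose ratio is controlled by composing an $i$-estimate with a $j$-estimate through property (3), yielding $c''=c''(c)\to0$. I would finish by recording that $c''$ can be chosen uniformly over the finitely many pairs $(i,j)$, which is what legitimizes the chart-overlap hypotheses of the regularized Stokes' framework.
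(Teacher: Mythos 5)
Your proposal follows essentially the same route as the paper's proof: properties (1)--(2) are treated as immediate, property (3) is proved by chaining the defining inequality \eqref{eq:1/N} along the tree via triangle inequalities and an induction over levels, and property (4) is handled exactly as in the paper --- in the non-nested case by extracting crossing sets whose points yield two mutually contradictory ratio bounds through property (3), and in the nested case by verifying each defining inequality of $V_{\gcd(i,j)}^{c''}$ using property (3) (with the reduction to individual points and averaging when a set lies in only one of the two families). The paper's argument is simply a more detailed execution of this same plan.
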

\begin{proof}
The first two statements are obvious. %To simplify the resoning, we shall put all weights $n_a=1$.
For the third statement, the relevant part of the tree corresponding to $i$ looks schematically as follows:
\[
 \begin{tikzpicture}
  \node(v) at (0,2) {$\lca(B,D)$};
  \node(D) at (1,1) {$D$};
  \node(w) at (-1,1) {$\lca(B,C)$};
  \node(B) at (-2,0) {$B$};
  \node(C) at (0,0) {$C$};
  \draw (v) edge node[right] {$s$} (D) edge node[left] {$p$} (w) (w) edge node[left]{$q$} (B) edge node[right]{$r$} (C);
 \end{tikzpicture}.
\]
Here the numbers $p,q,r,s$ shall indicate the number of levels between the nodes. Note that $q=0$ or $r=0$ is allowed.
We will do an induction on $p,q,r,s$. We will use a generic constant $c'$ tending to zero as 
$c\to 0$ in the induction hypothesis, the precise form of $c'$ as function of $c$ will not be kept track of.
First, one may perform an induction on $p+q$, reducing the statement to the case $p=1$, $q=0$. Concretely, we may estimate
\[
 |\zeta_B-\zeta_C|\leq |\zeta_{p(B)}-\zeta_C|+|\zeta_{p(B)}-\zeta_B| 
 \leq 2 c' |\zeta_B-\zeta_D|
\]
using the induction hypothesis.
Then, one performs an induction on $r+s$. One estimates 
\[
 |\zeta_B-\zeta_C|\leq |\zeta_B-\zeta_{p(C)}| + |\zeta_C-\zeta_{p(C)}| 
 \leq c'|\zeta_B-\zeta_D| + c |\zeta_S-\zeta_{p(C)}| 
 \leq (1+c)c' |\zeta_B-\zeta_D|,
\]
where $S$ is any sibling of $p(C)$ under $p(p(C))$ and we used the induction hypothesis twice. Similarly, 
\begin{align*}
 |\zeta_D-\zeta_B| &\geq |\zeta_{p(D)}-\zeta_B|-|\zeta_{p(D)}-\zeta_{D}|
 \geq
 |\zeta_{p(D)}-\zeta_B|-c|\zeta_{p(D)}-\zeta_{S'}|
 \geq
 |\zeta_{p(D)}-\zeta_B|-c c' |\zeta_{p(D)}-\zeta_{B}|
 \\&= |\zeta_{p(D)}-\zeta_B|(1-c c')
 \geq |\zeta_{C}-\zeta_B|\frac{1-c c'}{c'},
 \end{align*}
where $S'$ is a sibling of $p(D)$ and we again used the induction hypothesis.
This reduces the statement to the case $q=0$ and $p=r=s=1$ which is just \eqref{eq:1/N}.

%For the fourth assertion of the lemma one has to check that the torus action is well defined, i.~e., that no ``collisions'' happen on rotating a cluster of points. But this is clear given for sufficiently small $c$ given the previous assertion.

Consider the last assertion of the lemma.
First assume that ${\rm gcd}(i,j)$ does not exist. Then, $i \cup j$ is not a nested family and one can choose $B\in i$, $C\in j$ such that the subsets $B \setminus C, B \cap C$ and $C \setminus B$ are non-empty. Pick elements $b_1\in B\setminus C$, $b_2\in B\cap C$, $b_3\in C\setminus B$.
Then, by the third assertion the two inequalities
\begin{align*}
 |z_{b_1}-z_{b_2}|&\leq c' |z_{b_2}-z_{b_3}|
 &
 |z_{b_2}-z_{b_3}|&\leq c' |z_{b_1}-z_{b_2}|
\end{align*}
have to be satisfied simultaneously, a contradiction for $c$ sufficiently small so that $c'<1$. Hence it follows that in this case $V_i^c \cap V_j^c=\emptyset$.

Finally, let's assume that $i$ and $j$ do have a common descendant $k:={\rm gcd}(i,j)$. We have to check that each defining inequality \eqref{eq:1/N} of $V_k^{c''}$ (for some $c''$ possibly slightly larger than $c$) is implied by the defining inequalities of $V_i^c$ and $V_j^c$. 
Let $B,C,D\in k$ be as in \eqref{eq:1/N}. Without loss of generality, we may assume that $B\in i$. If also $C\in i$, then by the third assertion above 
\[
 |\zeta_B-\zeta_C| \leq c' |\zeta_B-z_d|
\]
for all $d\in D$. Furthermore, again by the third assertion and irrespective of whether $D$ is in $i$ or $j$: $|\zeta_{D}-z_d|\leq c'|\zeta_B-z_b|$ for any $b\in B$. 
It follows that if $c$ is small enough, then for some slightly larger constant $c''$ (tending to 0 as $c\to 0$) 
\[
|\zeta_B-\zeta_C| \leq c'' |\zeta_B-\zeta_D|. 
\]
If $C$ is not in $i$, then by the same argument we still obtain an inequality 
\[
 |\zeta_B-z_x| \leq c'' |\zeta_B-z_D|
\]
for all $x\in C$. But 
\[
 |\zeta_B-\zeta_C|=|\zeta_B-\frac 1 {|C|} \sum_{x\in C} z_x|
 \leq 
 \frac 1 {|C|} \sum_{x\in C}|\zeta_B-\zeta_C|
 \leq c'' |\zeta_B-\zeta_D|
\]
and we are done.
\end{proof}

\subsection{Coordinates and torus actions}
The space ${\rm Conf}_A$ is a manifold with corners. This structure can be described in the following way. Let $i$ be a nested family and $V_i^c$ be the corresponding chart. For each subset $B \in i$, introduce a parameter $r_B \geq 0$. First, assume that $\underline i = \{ B\}$ and choose $b \in B$. Then, we use the parametrization
$$
z_b = \zeta_B + r_B z^{(1)}_b,
$$
where the new coordinates $z^{(1)}_b$ are normalized such that
\begin{align*}
 \sum_{b \in B} z^{(1)}_b &=0
 & &\text{and} &
 \sum_{b \in B}  \big|z^{(1)}_b\big|^2 &= 1.
\end{align*}
The boundary component defined by the equation $r_B=0$ is isomorphic to ${\rm Conf}^{{\rm open}}_{A \backslash B \cup \beta} \times {\rm Conf}^{{\rm open}}_B$, where 
$\{ z^{(1)}_b \}_{b \in B}$ are coordinates on ${\rm Conf}^{{\rm open}}_B$ and $\{ z_a \}_{a \in A \backslash B}$, $\zeta_B$ are coordinates on ${\rm Conf}^{{\rm open}}_{A \backslash B \cup \beta}$ where the new element $\beta$ is mapped to $\zeta_B$. 
%The normalization on ${\rm Conf}^{{\rm open}}_A$ is consistent with the normalization on ${\rm Conf}^{{\rm open}}_{A \backslash B \cup \beta}$ if one chooses the weight $n_\beta= \sum_{b \in B} n_b$.
 In general, one repeats this procedure for chains of embeddings $b \in B_k \subset B_{k-1} \subset \dots \subset B_1$ to obtain parametrizations of the form
$$
z_b = \zeta_{B_1} + r_{B_1}(\zeta^{(1)}_{B_2} + r_{B_2}( \dots (\zeta^{(k-1)}_{B_{k}} + r_{B_k}z^{(k)}_b) \dots ).
$$

Consider the set $V_i^c$ for $c$ sufficiently small and some $B\in i$ with $2\leq |B|< |A|$. We define an action of the circle group $S^1=:S^1_B$ by rotating all points in $B$ around their center of mass $\zeta_B$. For $c$ sufficiently small, this action is well-defined by assertion \ref{lemitem:Vprop3} of Lemma \ref{lem:Vprops}.

\begin{prop}
Let $i$ be a nested family and $B,C \in i$. Then, the actions of $S^1_B$ and $S^1_C$ commute.
\end{prop}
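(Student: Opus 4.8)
The plan is to exploit the nesting hypothesis to reduce to two cases and then verify commutativity by a short computation organized around the behaviour of the centers of mass $\zeta_B$ and $\zeta_C$. Recall that $S^1_B$ acts by $z_b \mapsto e^{i\theta}(z_b-\zeta_B)+\zeta_B$ for $b\in B$ while fixing all other points, and analogously for $S^1_C$; crucially, $\zeta_B$ and $\zeta_C$ are configuration-dependent. Since $B,C\in i$ and $i$ is nested, either $B$ and $C$ are incomparable, in which case $B\cap C=\emptyset$, or one contains the other, and after exchanging their roles we may assume $B\subset C$. In the disjoint case the two maps move complementary sets of points, and $\zeta_B$ depends only on the coordinates in $B$ (untouched by $S^1_C$) while $\zeta_C$ depends only on those in $C$; the two actions therefore operate on independent blocks of coordinates with independent angles and commute trivially.

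The real content is the nested case $B\subset C$, which I would handle after isolating two structural facts. First, rotating $B$ about its own center of mass leaves the sum $\sum_{b\in B}z_b$ unchanged, hence preserves $\zeta_B$ and, since $B\subset C$, also preserves $\zeta_C$; thus $\zeta_C$ is $S^1_B$-invariant. Second, under $S^1_C$ every point of $B$ undergoes the single affine rotation $z\mapsto e^{i\phi}(z-\zeta_C)+\zeta_C$, so $\zeta_B$ is carried to $e^{i\phi}(\zeta_B-\zeta_C)+\zeta_C$ by that same rotation. Granting these, I would evaluate both composites on the three kinds of points: points outside $C$ are fixed by both maps; points of $C\setminus B$ are moved only by $S^1_C$, and since $S^1_B$ fixes both these points and (by the first fact) $\zeta_C$, the two orders agree; and for a point $z_b$ with $b\in B$ both orderings produce $e^{i(\theta+\phi)}(z_b-\zeta_B)+e^{i\phi}(\zeta_B-\zeta_C)+\zeta_C$, the agreement resting only on $e^{i\theta}e^{i\phi}=e^{i\phi}e^{i\theta}$ in $\C$.

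The step I expect to require the most care — and the genuine obstacle — is that these are not fixed, globally defined commuting rotations but diffeomorphisms whose rotation centers shift with the configuration. The two structural facts are exactly what tame this dependence: they ensure that the center $\zeta_C$ used by $S^1_C$ and the center $\zeta_B$ used by $S^1_B$ come out the same regardless of the order of composition, so the bookkeeping closes up. I would also remark that for $c$ small both circle actions are well-defined on $V_i^c$ by assertion \ref{lemitem:Vprop3} of Lemma \ref{lem:Vprops}, so the entire computation stays inside the chart and no two points are ever brought into collision.
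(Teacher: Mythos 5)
Your proof is correct and follows essentially the same route as the paper: a case split into $B\cap C=\emptyset$ (trivial commutation on disjoint blocks) and $B\subset C$, where commutativity is verified by direct computation, with the key point — that after rotating by $S^1_C$ the subsequent $S^1_B$ rotation is centered at the transported center $e^{i\phi}(\zeta_B-\zeta_C)+\zeta_C$ — handled identically (your two "structural facts" are exactly what the paper's explicit composite-map calculation uses, the first of them implicitly). Both orderings indeed yield $e^{i(\theta+\phi)}(z_b-\zeta_B)+e^{i\phi}(\zeta_B-\zeta_C)+\zeta_C$, matching the paper's formula.
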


\begin{proof}
If $B \cap C = \emptyset$, the actions of $S^1_B$ and of $S^1_C$ move different groups of points while preserving their centers of mass. Hence, they commute.

If $B \subset C$, the action of $S^1_B$ on the  points corresponding to elements $c \in C \backslash B$ is trivial. Hence, it commutes with the action of $S^1_C$. For the points corresponding to elements $b \in B$, we obtain for a rotation around $\zeta_B$ followed by a rotation around $\zeta_C$
$$
\begin{array}{lll}
z & \mapsto & e^{i \theta_B} (z - \zeta_B) + \zeta_B \\
& \mapsto & e^{i\theta_C}(e^{i \theta_B} (z - \zeta_B) + \zeta_B - \zeta_C) + \zeta_C \\
& = & e^{i(\theta_B + \theta_C)} z + e^{i\theta_C}(1-e^{i\theta_B}) \zeta_B + (1-e^{i\theta_C}) \zeta_C.
\end{array}
$$
And for a rotation around $\zeta_C$ followed by a rotation around $\zeta'_B=e^{i\theta_C}(\zeta_B-\zeta_C) + \zeta_C$ we get
$$
\begin{array}{lll}
z & \mapsto & e^{i \theta_C} (z - \zeta_C) + \zeta_C \\
& \mapsto & e^{i\theta_B}(e^{i \theta_C} (z - \zeta_C) + \zeta_C - e^{i\theta_C}(\zeta_B-\zeta_C) - \zeta_C) + e^{i\theta_C}(\zeta_B-\zeta_C) + \zeta_C \\
& = & e^{i(\theta_B + \theta_C)} z + e^{i\theta_C}(1-e^{i\theta_B}) \zeta_B + (1-e^{i\theta_C}) \zeta_C,
\end{array}
$$
as required.
\end{proof}

Hence, on the the set $V_i^c$ for $c$ sufficiently small we obtain an action of the torus $T_i$ of dimension ${\rm dim} \, T_i = |i|$. Note that the coordinate functions $r_B \geq 0$ for $B \in i$ are invariant under the action of $T_i$. For the circle action $S^1_B$, we shall denote the corresponding fundamental vector field by $v_B$.

\subsection{The partition of unity}

In this section we shall use the subsets $V_i^c$ to construct the partition of unity on the configuration space with the property that the function $\rho_i$ is $T_i$-invariant.

We call nested families $i$, $j$ \emph{non-ancestors} if they are incomparable in the partial ordering, i.~e., it neither holds that $i\geq j$ nor that $i \leq j$. If  $\ocd(i,j)$ exists this is equivalent to saying that $\ocd(i,j)<i,j$.

Fix sufficiently small numbers $0<c_0<\tilde c_0<c_1<\tilde c_1<\cdots $ such that all torus actions are defined and such that for all non-ancestors $i, j$:
\begin{equation}\label{equ:intersectionprop}
V_i^{\tilde c_{|i|}} \cap V_j^{\tilde c_{|j|}} \subset  
V_{{\ocd}(i,j)}^{c_{|{\ocd}(i,j)|}}.
\end{equation}
Here we again interpret the right-hand side as the empty set if $\ocd(i,j)$ does not exist.
Concretely we can choose such $c_n, \tilde c_n$ by an (inverse) recursion on $n$.
At the $n$-th stage one chooses $\tilde c_n$ such that for all non-ancestors $i,j$ with $|i|\leq |j|=n$
\[
 V_i^{\tilde c_{n}} \cap V_j^{\tilde c_{n}} \subset  
V_{\ocd(i,j)}^{c_{|\ocd(i,j)|}}.
\]
This is possible since there are only finitely many such $i,j$ and by assertion \ref{lemitem:Vprop4} of Lemma \ref{lem:Vprops}. Note also that automatically $|\ocd(i,j)|>n$ (if $\ocd(i,j)$ exists) and hence $c_{|\ocd(i,j)|}$ is already known from earlier stages of the recursion.
Then one picks $c_n<\tilde c_n$ arbitrarily. The required inequality then follows since if $n=\max(|i|, |j|)$ then $\tilde c_{|i|},\tilde c_{|j|}\leq \tilde c_n$ and hence:
\[
V_i^{\tilde c_{|i|}} \cap V_j^{\tilde c_{|j|}} 
\subset 
V_i^{\tilde c_{n}} \cap V_j^{\tilde c_{n}}.
\]

Choose functions $\chi_i$ on ${\rm Conf}_A$ such that
\begin{itemize}
\item $\chi_i\equiv 1$ on a neighborhood of $V_i^{c_{|i|}}$.
\item $\chi_i$ is supported on $V_i^{\tilde c_{|i|}}$.
\item $\chi_i$ is invariant under the $T_i$ action.
\end{itemize}
In particular we choose $\chi_{\tops}\equiv 1$.
Then we define a partition of unity $\rho_i$ recursively such that
\[
\rho_i = \chi_i (1-\sum_{j, |j|>|i|}\rho_j).
\]

\begin{lemma}\label{lem:rhoiprops}
The functions $\rho_i$ thus defined are indeed a partition of unity, i.~e., $0\leq \rho_i\leq 1$ and $\sum_i\rho_i=1$. Furthermore $\supp\rho_i\cap \supp \rho_j=\emptyset$ if $i$ and $j$ are non-ancestors.
\end{lemma}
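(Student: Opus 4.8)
The plan is to analyze the recursively defined $\rho_i$ \emph{level by level}, where I call $|i|$ the \emph{level} of a nested family $i$, and to prove the needed bounds by a downward induction over the (finitely many) values of the level. Introduce, for a real parameter $\ell$, the partial sums $R_{>\ell}=\sum_{j:\,|j|>\ell}\rho_j$ and $R_{\ge\ell}=\sum_{j:\,|j|\ge\ell}\rho_j$; note that $R_{>\ell}$ involves only strictly higher levels, so the defining formula $\rho_i=\chi_i(1-R_{>|i|})$ expresses each level-$\ell$ function through already-known quantities, and all level-$\ell$ families share the same factor $R_{>\ell}$. A one-line computation then yields the key identity
\[
1-R_{\ge\ell}=(1-R_{>\ell})\Big(1-\sum_{|i|=\ell}\chi_i\Big),
\]
which drives the whole argument.

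Before the induction I would record the elementary combinatorial fact that two distinct families with the same level are automatically non-ancestors: if $i\geq j$ (that is $i\subset j$) with $i\neq j$, then $\underline i\subsetneq\underline j$, so $|i|<|j|$. Hence, whenever two level-$\ell$ cutoffs $\chi_i,\chi_j$ have overlapping supports, the containment $\supp\chi_i\subset V_i^{\tilde c_{|i|}}$ together with assertion \ref{lemitem:Vprop4} of Lemma \ref{lem:Vprops}, in the precise form \eqref{equ:intersectionprop}, forces the overlap into a chart $V_{\ocd(i,j)}^{c_{|\ocd(i,j)|}}$ of \emph{strictly higher} level, $|\ocd(i,j)|>\ell$. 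This is the mechanism that tames the overlaps.

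The induction hypothesis at level $\ell$ reads: (I1) $0\le R_{\ge\ell}\le1$ everywhere, and (I2) $R_{\ge\ell}\equiv1$ on a neighborhood of $\bigcup_{|j|\ge\ell}V_j^{c_{|j|}}$. The base case is the maximal level $N$, where $\ocd$ of two distinct families cannot exist, so the $\chi_i$ have pairwise disjoint supports and $R_{\ge N}=\sum_{|i|=N}\chi_i$. For the step I pass from $R_{>\ell}$ (which equals $R_{\ge}$ of the next higher level, hence satisfies (I1),(I2)) to $R_{\ge\ell}$ via the key identity. Bound (I1) is checked pointwise: where at most one level-$\ell$ cutoff is nonzero both factors lie in $[0,1]$; where two are nonzero, that point lies in a higher chart, so (I2) at the higher level gives $R_{>\ell}=1$ and the right-hand side vanishes. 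Property (I2) then follows as well: near a higher chart $(1-R_{>\ell})$ already vanishes, while near a level-$\ell$ chart $V_i^{c_\ell}$ one has $\chi_i\equiv1$, so $1-\sum_{|i'|=\ell}\chi_{i'}\le0$, which combined with (I1) forces $R_{\ge\ell}=1$.

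Finally I would harvest the three conclusions. The identity $\sum_i\rho_i=1$ is immediate from $\chi_{\tops}\equiv1$ and $V_{\tops}^{c}={\rm Conf}_A$: since $\tops$ is the unique level-$0$ family, $\rho_{\tops}=1-R_{>0}$ and $\sum_i\rho_i=R_{\ge0}=R_{>0}+\rho_{\tops}=1$. The bounds $0\le\rho_i\le1$ follow from $\rho_i=\chi_i(1-R_{>|i|})$ using $0\le R_{>|i|}\le1$ from (I1). For the support statement, let $i,j$ be non-ancestors; then $\supp\rho_i\cap\supp\rho_j\subset V_i^{\tilde c_{|i|}}\cap V_j^{\tilde c_{|j|}}$, which by \eqref{equ:intersectionprop} is empty unless $k:=\ocd(i,j)$ exists, in which case it lies in $V_k^{c_{|k|}}$ with $|k|>|i|$; but (I2) gives $R_{>|i|}=1$ on a neighborhood of $V_k^{c_{|k|}}$, so $\rho_i=\chi_i(1-R_{>|i|})=0$ there and the intersection is empty. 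I expect the main obstacle to be precisely the non-disjointness of same-level cutoffs: the naive estimate $\sum_{|i|=\ell}\chi_i\le1$ is false, and the construction survives only because overlaps are confined to higher-level charts where the prefactor $1-R_{>\ell}$ has already saturated to $0$. Book-keeping this correctly is exactly what forces (I1) and (I2) to be proved together in a single simultaneous induction.
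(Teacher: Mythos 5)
Your proof is correct, and it shares the paper's skeleton --- a downward induction on the level $|i|$, powered by \eqref{equ:intersectionprop} and by the saturation property that the partial sums equal $1$ on a neighborhood of every chart $V_j^{c_{|j|}}$ of level at least the current one (the paper's third induction assertion, your (I2)) --- but the decomposition inside the induction is genuinely different. The paper carries four statements simultaneously, including the disjointness of supports of non-ancestor $\rho_i$, $\rho_j$, and it \emph{uses} that disjointness to get $\sum_{|k|\ge n}\rho_k\le 1$: same-level $\rho$'s never overlap, so at most one of them contributes at any point. You never need disjointness inside the induction: the identity $1-R_{\ge\ell}=(1-R_{>\ell})\bigl(1-\sum_{|i|=\ell}\chi_i\bigr)$ reduces everything to the observation that at a point where two same-level cutoffs are simultaneously nonzero, \eqref{equ:intersectionprop} puts the point inside a strictly higher-level chart, where (I2) one level up makes the prefactor $1-R_{>\ell}$ vanish, so the possibly negative second factor is harmless. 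This yields a leaner two-statement induction, with the pointwise bounds $0\le\rho_i\le 1$ and the support disjointness extracted afterwards as corollaries; your closing argument ($\supp\rho_i\cap\supp\rho_j\subset V_{\ocd(i,j)}^{c_{|\ocd(i,j)|}}$ while $\rho_i$ vanishes identically on an open neighborhood of that chart, so its closed support misses it) is a streamlined version of the paper's proof of its fourth assertion, and it is sound precisely because the vanishing holds on an open neighborhood, which takes care of the closure implicit in $\supp$. What the paper's heavier induction buys in exchange is that the disjoint-support property --- the part of the lemma actually invoked later, in the proof that each $\rho_i$ is $T_i$-invariant --- stays explicit at every stage rather than being harvested at the end. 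In both proofs, though, the underlying mechanism is identical: overlaps of same-level cutoffs are confined to higher-level charts on which the prefactor has already saturated to zero.
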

To prove the Lemma, we will show by (reverse) induction on $n$ that the following statements hold true  
\begin{enumerate}
\item\label{itm:inditem1}  $0\leq \rho_i\leq 1$ for all $i$ with $|i|\geq n$
\item\label{itm:inditem2}  $\sum_{k,|k|\geq n}\rho_k\leq 1$.
\item\label{itm:inditem3} $\sum_{k,|k|\geq n}\rho_k = 1$ on a neighborhood of each $V_i^{c_{|i|}}$, with $n\leq |i|$.
\item\label{itm:inditem4} If $i$ and $j$, $|i|$, $|j|\geq n$ are non-ancestors, then $\supp \rho_i\cap \supp \rho_j=\emptyset$ 
\end{enumerate}
\begin{proof}
Indeed, one verifies that these properties hold for the highest degree, using assertion \ref{lemitem:Vprop4} of Lemma \ref{lem:Vprops}.
Now suppose the above properties hold for $n+1$, and we want to show them for $n$. 
Assertion \ref{itm:inditem1} (for $n$) follows immediately from the induction assumption \ref{itm:inditem2} (for $n+1$).
Furthermore, suppose $|j|\geq |i|= n$, and that $i$ and $j$ are non-ancestors. Then, since $|\ocd(i,j)|>\max(n,|j|)$ and by the induction assumption \ref{itm:inditem3} above
\begin{multline*}
\{z\mid \sum_{k,|k|>|i|}\rho_k(z)<1\}=
 \{z\mid \sum_{k,|k|>n}\rho_k(z)<1\}
 \subset 
 \{z\mid \sum_{k,|k|>|j|}\rho_k(z)<1\}
 %=
 %\{z\mid \sum_{k,|k|>n}\rho_k(z)<1\}
 \\
 \subset 
 \{z\mid \sum_{k,|k|\geq |\ocd(i,j)|}\rho_k(z)<1\}
 \subset 
 \left(\tilde V_{{\rm gcd}(i,j)}^{c_{|{\rm gcd}(i,j)|}}\right)^c  
\end{multline*}
where $\tilde V_{{\rm gcd}(i,j)}^{c_{|{\rm gcd}(i,j)|}}$ is a neighborhood of $V_{{\rm gcd}(i,j)}^{c_{|{\rm gcd}(i,j)|}}$ as in the induction assumption \ref{itm:inditem3}.
Hence we find that
\[
 \supp \rho_i \subset \overline{ V_i^{\tilde c_{|i|}} \cap \{z\mid 1-\sum_{k,|k|>|i|}\rho_k(z)>0\} }
 \subset 
 \overline{ V_i^{\tilde c_{|i|}} \cap \left(\tilde V_{{\rm gcd}(i,j)}^{c_{|{\rm gcd}(i,j)|}}\right)^c }
 \subset 
 V_i^{\tilde c_{|i|}} \cap \overline{ \left(\tilde V_{{\rm gcd}(i,j)}^{c_{|{\rm gcd}(i,j)|}}\right)^c }
\]
and similarly for $\supp \rho_j$, where we used in particular that the sets $V_i^c$ are closed.
It follows that
\begin{align*}
\supp \rho_i\cap \supp\rho_j
&\subset 
V_i^{\tilde c_{|i|}}
\cap
V_j^{\tilde c_{|j|}}
\cap 
\overline{ \left(\tilde V_{{\rm gcd}(i,j)}^{c_{|{\rm gcd}(i,j)|}}\right)^c }
\\&
\subset
V_{{\rm gcd}(i,j)}^{c_{|{\rm gcd}(i,j)|}}
\cap
\overline{ \left(\tilde V_{{\rm gcd}(i,j)}^{c_{|{\rm gcd}(i,j)|}}\right)^c }
=\emptyset
\end{align*}
where we have used \eqref{equ:intersectionprop}. This shows assertion \ref{itm:inditem4} for $n$.
In particular, it follows that for $i\neq j$, with $|i|=|j|=n$ the functions $ \rho_i$, $\rho_j$ have disjoint support and hence it is clear that \ref{itm:inditem2} holds. Finally, for \ref{itm:inditem3} it then suffices to note that on a neighborhood of $V_i^{c_{n}}$ on which $\chi_i\equiv 1$ with $|i|=n$
\[
 \sum_{k,|k|\geq n}\rho_k
 \geq 
 \rho_i + \sum_{k,|k|> n}\rho_k
 =
 \chi_i(1-\sum_{k,|k|> n}\rho_k) + \sum_{k,|k|> n}\rho_k
 =
 \chi_i+(1-\chi_i)\sum_{k,|k|> n}\rho_k
 =1+0=1.
\]
\end{proof}

\begin{lemma}
 Each $\rho_i$ is $T_i$-invariant.
\end{lemma}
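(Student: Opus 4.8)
The goal is to show that each partition-of-unity function $\rho_i$ is invariant under the torus $T_i$. The plan is to proceed by the same reverse induction on $n=|i|$ that was used to establish Lemma~\ref{lem:rhoiprops}, exploiting the recursive definition $\rho_i=\chi_i\big(1-\sum_{j,\,|j|>|i|}\rho_j\big)$. Since $\chi_i$ is $T_i$-invariant by construction, the task reduces to showing that the correction factor $1-\sum_{j,\,|j|>|i|}\rho_j$ is $T_i$-invariant on the support of $\chi_i$, i.e.\ on $V_i^{\tilde c_{|i|}}$.

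First I would record the base case: for the top nested family $\tops$ the torus $T_{\tops}$ is trivial, so invariance is vacuous, and $\chi_{\tops}\equiv 1$. For the inductive step, assume every $\rho_j$ with $|j|>n$ is $T_j$-invariant and fix $i$ with $|i|=n$. The delicate point is that a given $\rho_j$ in the sum need only be $T_j$-invariant, not $T_i$-invariant, so I cannot invoke its invariance directly. The key observation is that I only need the invariance of $\rho_j$ under $T_i$ \emph{restricted to the support of $\chi_i$}, and there are two cases. If $j\geq i$ (so $T_i\subset T_j$ and the relevant circles $S^1_B$ for $B\in\underline i$ sit inside $T_j$), then $T_i$-invariance of $\rho_j$ follows from its $T_j$-invariance. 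If $i$ and $j$ are non-ancestors, then by assertion~\ref{itm:inditem4} of Lemma~\ref{lem:rhoiprops} their supports are disjoint, so $\rho_j$ vanishes identically on $\supp\chi_i\subset V_i^{\tilde c_{|i|}}$ (the supports of $\rho_i$ and $\rho_j$ are disjoint, and $\supp\chi_i$ is contained in the region where $\rho_i$ is being built), and an identically-zero function is trivially $T_i$-invariant there. The remaining possibility $j<i$ cannot occur in the sum, since there $|j|<|i|$.

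I expect the main obstacle to be the non-ancestor case, specifically verifying cleanly that $\rho_j$ actually vanishes on the region where $T_i$ acts, rather than merely having support disjoint from $\supp\rho_i$. The honest statement is that $T_i$ acts on $V_i^{\tilde c_{|i|}}$, and on this set every $\rho_j$ with $j$ a non-ancestor of $i$ vanishes because $\supp\rho_j\cap V_i^{\tilde c_{|i|}}=\emptyset$; this last fact is exactly what the disjointness-of-supports computation in the proof of Lemma~\ref{lem:rhoiprops} delivers, since $\supp\rho_j\subset V_j^{\tilde c_{|j|}}$ and $V_i^{\tilde c_{|i|}}\cap V_j^{\tilde c_{|j|}}\subset V_{\ocd(i,j)}^{c}$ lies outside $\supp\rho_j$. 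Thus on $V_i^{\tilde c_{|i|}}$ the sum $\sum_{j,\,|j|>|i|}\rho_j$ reduces to a sum over $j\geq i$ only, each term of which is $T_i$-invariant by the inductive hypothesis and the inclusion $T_i\subset T_j$.

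Putting these together, on $V_i^{\tilde c_{|i|}}\supset\supp\chi_i$ the factor $1-\sum_{j,\,|j|>|i|}\rho_j$ is $T_i$-invariant, and since $\chi_i$ is $T_i$-invariant and $\rho_i$ is supported inside $V_i^{\tilde c_{|i|}}$, the product $\rho_i=\chi_i\big(1-\sum_{j,\,|j|>|i|}\rho_j\big)$ is $T_i$-invariant everywhere. This closes the induction and proves the lemma.
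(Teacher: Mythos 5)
Your proof is correct and follows essentially the same route as the paper's: both rest on the recursion $\rho_i=\chi_i\big(1-\sum_{j,\,|j|>|i|}\rho_j\big)$, the disjointness of supports for non-ancestors from Lemma \ref{lem:rhoiprops}, the inclusion $T_i\subset T_j$ for comparable $j$ with $|j|>|i|$, and the $T_i$-invariance of $\chi_i$; the paper merely packages this as a proof by contradiction (using a continuity argument to move the two offending points into $\supp\rho_i$, where the reduced formula holds), whereas you argue directly on $V_i^{\tilde c_{|i|}}$ using the slightly stronger support statement extracted from the proof of Lemma \ref{lem:rhoiprops}. One cosmetic caveat: with the paper's ordering convention ($i\geq j$ iff $i\subset j$), the comparable case in your argument should be written $j<i$ rather than $j\geq i$, although your parenthetical description (the circles $S^1_B$, $B\in\underline i$, sit inside $T_j$) identifies the correct situation.
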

\begin{proof}
Suppose that to the contrary we can find a configuration $z$ and a $t\in T_i$ such that $\rho_i(t\cdot z)\neq \rho_i(z)$. By replacing $z$ by $t^\alpha\cdot z$ for some $\alpha>0$ we may assume that in addition $\rho_i(t\cdot z), \rho_i(z)>0$ by continuity. In particular, we may assume that $z, t\cdot z\in\supp \rho_i$. But on $\supp \rho_i$ we may write by Lemma \ref{lem:rhoiprops}
\[
\rho_i
= 
\chi_i (1-\sum_{j, |j|>|i|}\rho_j)
=
\chi_i (1-\sum_{j, j < i}\rho_j)
\]
since for $j$ such that $i$, $j$ are non-ancestors $\rho_i$ and $\rho_j$ have disjoint support.
On the right-hand side each $\rho_j$ is $T_j$ invariant and hence also $T_i\subset T_j$ invariant. Since $\chi_i$ is $T_i$ invariant we conclude that $\rho_i(t\cdot z)= \rho_i(z)$, a contradiction. Hence the Lemma follows.
\end{proof}

Finally, we take for $U_i$ a small enough $T_i$-invariant neighborhood of $\supp \rho_i$, such that for $i$ and $j$ non-ancestors the assertion $U_i\cap U_j=\emptyset$ still holds. 

Summarizing, we have constructed a partition of unity $\{\rho_i\}$ subordinate to a cover $\{U_i\}$ such that:
\begin{itemize}
 \item $U_i\cap U_j=\emptyset$ unless $i\geq j$ or $j\geq i$.
 \item Each $U_i$ carries a free action of a torus $T_i$.
 \item The function $\rho_i$ is $T_i$ invariant.
 \item If $i>j$ there is a natural inclusion $T_j\to T_i$, compatible with the action of $T_i$ on $U_i\cap U_j$. 
\end{itemize}

\subsection{Configurations in the upper half-plane}
The configuration spaces of points in the upper half-plane and on the real line are described in a similar fashion. In more detail, let $\tau: z \mapsto \overline{z}$ be the complex conjugation, $A$ be a finite set and $\sigma$ be an involution on $A$. We shall represent $A$ as a disjoint union $A_+ \cup A_- \cup A_0$. Here $A_0$ is the fixed point set of $\sigma$, and $\sigma$ restricts to a bijection $A_+ \to A_-$. Furthermore, we shall fix a total order on $A_0$. Then, we consider injective maps $z: A \to \C$ which intertwine $\sigma$ and $\tau$. In particular, $A_0$ maps to the real line, and we require  its total order to be compatible with the natural increasing order on the real line. We shall also require that $A_+$ maps to the upper half-plane. Then, $A_-$ will automatically map to the lower half-plane.

Such a space of maps carries a free action of the group $G_2=\R_+ \ltimes \R$. The quotient by this action is the open configuration space ${\rm Conf}^{{\rm open}}_{A, \sigma}$. One can again choose an explicit section for the action of $G_2$ defined by the equations
$$
\sum_{a \in A}  z_a =0, \hskip 0.3cm \sum_{a \in A}  |z_a|^2 = 1.
$$
Note that the first equation now takes values in the reals.
The compactification is defined in a similar fashion to the case of ${\rm Conf}_A$.

The structure of manifold with corners on ${\rm Conf}_{A, \sigma}$ is again described by the charts associated to nested families. We now require that the nested families involved be $\sigma$-invariant. Note that such a nested family may contain subsets of two types: either we have a pair of subsets $B \subset A_+, \sigma(B) \subset A_-$ (subsets of type I) or a $\sigma$-invariant subset of $A$ (subsets of type II). If a subset of type II   contains some elements of $A_0$, these elements should form a sequence without gaps under the total order. The set $A_0$ together with the set of subsets of type II which do not contain elements of $A_0$ should be equipped with the total order consistent with the total order of $A_0$.

Corresponding to the two types of subsets there are two types of boundary strata: boundary strata of type I correspond to the collapse of a certain number of points $z_b$ for $b \in B$ in the upper half plane to their center of mass $\zeta_B$. At the same time, the points $z_{\sigma(b)} = \overline{z}_b$  are collapsing to $\overline{\zeta}_B$. Type II boundary strata correspond to the collapse of a group of points $z_b, \overline{z}_b, z_{c}=\overline{z}_c$ for $b, \sigma(b), c \in C$ to their  center of mass $\zeta_C \in \R$.

The chart $U_i \subset {\rm Conf}_{A, \sigma}$ carries  an action of the torus $T_i$ of dimension equal to the number of type I subsets in $i$. Indeed, for a  type I subset $B \in i$ the action of $S^1_B$ preserves the property $z_{\sigma(b)} = \overline{z}_b$ while this is not the case for type II subsets.

\section{Logarithmic weights}  \label{sec:log}

In this section we discuss the logarithmic weights following the suggestion by M. Kontsevich \cite{K2}.

Let $A$ be a finite set with the involution $\sigma$, and let $\Gamma$ be a finite oriented graph with the set of vertices $V\Gamma = A_+ \cup A_0$. It is called admissible if it doesn't have double edges (with the same orientation) and simple loops, and if no edges start at vertices $v\in A_0$. We denote $|A_+|=n, |A_0|=m$, and we let ${\rm Graph}_{n,m}$ be the set of admissible graphs with the vertex set $V\Gamma = A_+ \cup A_0$.

To such a graph, we associate a differential form $\Omega^{{\rm log}}_\Gamma$ of degree $|E \Gamma|$ on the open configuration space ${\rm Conf}^{{\rm open}}_{A, \sigma}$. This form is given by the formula
$$
\Omega^{{\rm log}}_\Gamma = \prod_{e \in E\Gamma} \,  \frac{1}{2\pi i} \, 
\log\left( \frac{ z_{s(e)}-z_{t(e)}}{\overline z_{s(e)} - {z}_{t(e)}} \right)
%\Delta^{{\rm log}}(z_{t(e)}, z_{s(e)}),
$$
where $s(e)$ and $t(e)$ are the source and the target of the edge $e$. 
%and $\Delta^{{\rm log}}(z,w)$ is the 1-form
%
%$$
%\Delta^{{\rm log}}(z,w) = \frac{1}{2\pi i} \, \log\left( \frac{ z-w}{z - \overline{w}} \right) .
%$$
%Following the Physics tradition, this 1-form is refereed to as a {\em propagator}. 

Let $A=A_+ \cup A_-$ with $A_+=\{ s, t\}$ the set of two points, and choose a graph $\Gamma$ with the only edge starting at $s$ and ending at $t$. In this case, one can easily list all $\sigma$-invariant nested families:
$$
\begin{array}{lll}
\underline i_1 = \{ \{ s,t \}, \{ \sigma(s), \sigma(t)\} \} & \underline i_2 = \{ \{ s, \sigma(s)\} \} & \underline i_3 = \{ \{ t, \sigma(t) \} \} \\
\underline i_4 = \{ \{ s, \sigma(s)\} < \{ t, \sigma(t)\} \} & \underline i_5=\{ \{ t, \sigma(t)\} < \{ s, \sigma(s)\} \} . &
\end{array}
$$
Here the family $i_1$ consists of the pair of subsets of type I, all the other families consist of subsets of type II. It is easy to check that the 1-form $\Omega^{{\rm log}}_\Gamma$ extends smoothly to the boundary strata described by the charts $U_{i_k}$ with $k=2,3,4,5$, but it does not extend to the boundary stratum described by the chart $U_{i_1}$.  In general, these are type I boundary strata which pose problems, and below we analyze the behavior of $\Omega^{{\rm log}}_\Gamma$ on these strata.

\begin{prop} \label{prop:one_reg}
Let $\Gamma$ be an admissible graph. Consider a chart $U_i$, choose a vertex $B$ of
the tree defining $U_i$. Let $r_B \geq 0$ be the corresponding coordinate and $v_B$
be the fundamental vector field of the circle action on $U_i$.
Then, the form $\iota(v_B) \Omega_\Gamma^{{\rm log}}$ is regular in  $r_B$, and
the form $\Omega_\Gamma^{\rm log}$ admits a decomposition on $U_i$
$$
\Omega_\Gamma^{\rm log} = \frac{dr_B}{r_B} \, \wedge \alpha + {\rm terms \,\, regular \,\, in \,\, } r_B
$$
with $\alpha$ independent of $r_B$, $\iota(v_B) \alpha=0$ and $d\alpha=0$.
\end{prop}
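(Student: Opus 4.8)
The plan is to analyze the singular behaviour of $\Omega^{\rm log}_\Gamma=\bigwedge_{e\in E\Gamma}\omega_e$, where $\omega_e=\frac{1}{2\pi i}d\log\frac{z_{s(e)}-z_{t(e)}}{\bar z_{s(e)}-z_{t(e)}}$, one edge at a time, and then to reassemble the wedge product. Since $v_B$ is the fundamental field of the rotation about $\zeta_B$, the vertex $B$ must be of type I; in particular $\zeta_B$ lies strictly in the upper half-plane, so that $\bar\zeta_B-\zeta_B\neq 0$. I would work in the chart coordinates $z_b=\zeta_B+r_B z_b^{(1)}$ for $b\in B$, recording that $\zeta_B$ and the points $z_a$ with $a\notin B$ do not depend on $r_B$, that $r_B$ is $T_i$-invariant so $\iota(v_B)\,dr_B=0$, and that $v_B$ acts by $z_b^{(1)}\mapsto e^{i\theta}z_b^{(1)}$, i.e. $v_B(z_b-\zeta_B)=i(z_b-\zeta_B)$ and $v_B(\bar z_b-\bar\zeta_B)=-i(\bar z_b-\bar\zeta_B)$ for $b\in B$, while $v_B(z_a)=0$ for $a\notin B$.

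Next I would carry out the edge-by-edge analysis. For an edge $e=(s,t)$ with both endpoints in $B$ one has $z_s-z_t=r_B(z_s^{(1)}-z_t^{(1)})$, so $d\log(z_s-z_t)=\frac{dr_B}{r_B}+d\log(z_s^{(1)}-z_t^{(1)})$, whereas $\bar z_s-z_t=(\bar\zeta_B-\zeta_B)+r_B(\bar z_s^{(1)}-z_t^{(1)})$ stays bounded away from $0$; hence $\omega_e=\frac{1}{2\pi i}\frac{dr_B}{r_B}+\beta_e$ with $\beta_e$ regular in $r_B$, and the singular coefficient $\frac{1}{2\pi i}$ is the \emph{same} for every internal edge. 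For every other edge ($s\in B$, $t\notin B$, or vice versa, or neither endpoint in $B$) all of $z_s-z_t$ and $\bar z_s-z_t$ stay bounded away from $0$ as $r_B\to 0$, because the clusters are well separated (assertion \ref{lemitem:Vprop3} of Lemma \ref{lem:Vprops}) and $\bar\zeta_B$ lies in the lower half-plane, so $\omega_e$ is already regular in $r_B$. Wedging, and using $\frac{dr_B}{r_B}\wedge\frac{dr_B}{r_B}=0$ so that at most one singular factor survives, I obtain $\Omega^{\rm log}_\Gamma=\frac{dr_B}{r_B}\wedge\tilde\alpha+(\text{regular in }r_B)$ for some $\tilde\alpha$ regular in $r_B$. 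Finally I set $\alpha:=\tilde\alpha\mid_{r_B=0}$; since $\tilde\alpha-\alpha=O(r_B)$, the form $\frac{dr_B}{r_B}\wedge(\tilde\alpha-\alpha)$ is regular and may be absorbed into the regular remainder, which gives the decomposition with $\alpha$ independent of $r_B$.

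It then remains to check that $\alpha$ is basic. For $d\alpha=0$ I would use that each $\omega_e$ is closed, hence so is $\Omega^{\rm log}_\Gamma$; writing $d\Omega^{\rm log}_\Gamma=-\frac{dr_B}{r_B}\wedge d'\tilde\alpha+(\text{regular})$, where $d'$ is the differential in the directions other than $r_B$, the vanishing of the singular part of the identity $d\Omega^{\rm log}_\Gamma=0$ forces $(d'\tilde\alpha)\mid_{r_B=0}=d\alpha=0$. For $\iota(v_B)\alpha=0$ the key computation is that, for an internal edge, $\iota(v_B)\omega_e=\frac{1}{2\pi i}\bigl(\frac{v_B(z_s-z_t)}{z_s-z_t}-\frac{v_B(\bar z_s-z_t)}{\bar z_s-z_t}\bigr)$ restricts at $r_B=0$ to the constant $\frac{1}{2\pi i}\cdot i=\frac{1}{2\pi}$, the first term being $i$ identically and the second being $O(r_B)$; the same contraction kills every non-internal edge at $r_B=0$. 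Because this limiting value $\frac{1}{2\pi}$ is \emph{independent of the internal edge}, the terms of $\iota(v_B)\tilde\alpha$ cancel in pairs at $r_B=0$ (removing two internal edges in the two possible orders produces opposite signs but equal coefficients), whence $\iota(v_B)\alpha=0$. Combined with the decomposition and $\iota(v_B)\,dr_B=0$ this yields $\iota(v_B)\Omega^{\rm log}_\Gamma=-\frac{dr_B}{r_B}\wedge\iota(v_B)\tilde\alpha+(\text{regular})$ with $\iota(v_B)\tilde\alpha=O(r_B)$, so $\iota(v_B)\Omega^{\rm log}_\Gamma$ is regular as claimed.

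The main obstacle, I expect, is twofold: first, proving that the singular part is exactly the universal $\frac{1}{2\pi i}\frac{dr_B}{r_B}$ for \emph{all} internal edges simultaneously — this rests on $\zeta_B$ being strictly interior, so that the antiholomorphic denominator $\bar z_s-z_t$ is harmless — and second, the pairwise cancellation establishing $\iota(v_B)\alpha=0$, which works precisely because that universal coefficient makes $\iota(v_B)\beta_e\mid_{r_B=0}$ the same constant for every internal edge. Some care is also needed in the bookkeeping of what ``regular in $r_B$'' means in the presence of deeper nesting $B_k\subset\cdots\subset B_1$, where the inner coordinates $z_b^{(1)}$ themselves depend on smaller-scale parameters; but these dependencies never involve $r_B$, so the arguments above go through verbatim.
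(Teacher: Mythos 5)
Your proof is correct, and its first half (the edge\-/by\-/edge singularity analysis, the extraction of the $dr_B/r_B$ part, and the use of closedness of $\Omega^{\rm log}_\Gamma$ to obtain $d\alpha=0$) coincides with the paper's own argument. Where you genuinely diverge is the key step: the vanishing $\iota(v_B)\alpha=0$, equivalently the regularity of $\iota(v_B)\Omega^{\rm log}_\Gamma$. The paper proves this by a local trick: it passes to the modified field $v_B - i r_B\,\partial/\partial r_B$, noting that $\iota(\partial/\partial r_B)\,\iota(v_B)\,\Omega^{\rm log}_\Gamma = \iota(\partial/\partial r_B)\,\iota\left(v_B - i r_B \partial/\partial r_B\right)\Omega^{\rm log}_\Gamma$ because $\partial/\partial r_B \wedge v_B = \partial/\partial r_B \wedge \left(v_B - i r_B \partial/\partial r_B\right)$, and then checks that the modified contraction annihilates each internal edge factor identically, $\iota\left(v_B - i r_B \partial/\partial r_B\right) d\log(z-w) = i - i = 0$, while producing an explicit factor of $r_B$ on all other edges; this is edge-local and requires no sign bookkeeping. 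You instead expand $\iota(v_B)\tilde\alpha$ at $r_B=0$ and argue a global pairwise cancellation between the term in which edge $e$ carries the $dr_B/r_B$ and $e'$ is contracted, and the term with the roles swapped; this works precisely because of the two universal constants you isolate ($\tfrac{1}{2\pi i}$ as the singular coefficient and $\tfrac{1}{2\pi}$ as the limiting contraction value of every internal edge), and your sign claim does check out: for internal edges in positions $k<l$ the two terms carry signs $(-1)^{k+l-1}$ and $(-1)^{k+l}$, hence cancel. Both mechanisms encode the same fact, namely that rotation and radial scaling act with equal weight on internal differences $z_s-z_t=r_B(z^{(1)}_s-z^{(1)}_t)$; the paper's packaging is more economical, while yours makes the combinatorial reason for the cancellation explicit and is slightly more careful than the paper on one point, namely that $\alpha$ must be produced from the a priori $r_B$-dependent coefficient $\tilde\alpha$ by restriction to $r_B=0$ before one can call it independent of $r_B$.
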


\begin{proof}
In the form $\Omega_\Gamma^{\rm log}$, the
source of possible singularities with respect to the coordinate $r_B$
comes from the factors
$$
d \, \log(z-w) = \frac{d r_B}{r_B} + \, {\rm terms \,\, regular \,\, in \,\,} r_B,
$$
where both $z$ and $w$ belong to the set $B$. Hence, the form $\Omega_\Gamma^{\rm log}$
admits a decomposition
$$
\Omega_\Gamma^{\rm log} = \frac{d r_B}{r_B} \, \wedge \alpha + {\rm terms \,\, regular \,\, in \,\,} r_B,
$$
where $\alpha$ is independent of $r_B$. For the form $d \Omega_\Gamma^{\rm log}$, we obtain
$$
d \Omega_\Gamma^{\rm log} = - \frac{d r_B}{r_B} \, \wedge d \, \alpha + 
 \, {\rm terms \,\, regular \,\, in \,\,} r_B .
$$
Since $\Omega_\Gamma^{\rm log}$ is closed, we conclude that $d\alpha =0$.
Next, we compute
$$
\iota(v_B) \Omega_\Gamma^{\rm log} = - \frac{d r_B}{r_B} \, \wedge \iota(v_B) \alpha 
+ {\rm terms \,\, regular \,\, in \,\,} r_B .
$$
Hence, the form $\iota(v_B) \Omega_\Gamma^{\rm log}$ is regular in $r_B$ if and only if $\iota(v_B) \alpha =0$.

Consider the form
$$
\iota\left( \frac{\partial}{\partial r_B} \right) \, \iota(v_B) \Omega_\Gamma^{\rm log} = - r_B^{-1}  \iota(v_B) \alpha 
+ {\rm terms \,\, regular \,\, in \,\,} r_B .
$$
This form is regular in $r_B$ if and only if so is the form $\iota(v_B) \Omega_\Gamma^{\rm log}$.
Note that
$$
\frac{\partial}{\partial r_B} \wedge v_B = 
\frac{\partial}{\partial r_B} \wedge \left( v_B - i r_B \frac{\partial}{\partial r_B} \right),
$$
and hence
$$
\iota\left( \frac{\partial}{\partial r_B} \right)  \iota(v_B) \Omega_\Gamma^{\rm log} =
\iota\left( \frac{\partial}{\partial r_B} \right)  
\iota\left( v_B - i r_B \frac{\partial}{\partial r_B} \right) \Omega_\Gamma^{\rm log} .
$$

To complete the proof, we shall show that the form
$$
\iota\left( v_B - i r_B \frac{\partial}{\partial r_B} \right) \Omega_\Gamma^{\rm log}
$$
is regular in $r_B$. Indeed, for the points $z$ and $w$ in the set $B$
the circle action with generator $v_B$ maps $z-w \mapsto (z-w) \exp(i\phi)$, and
we have
$$
\iota\left( v_B - i r_B \frac{\partial}{\partial r_B} \right) \, d \log(z-w)  = i - i =0.
$$
For pairs of points $z$ and $w$ where at least one of the points does not belong
to the set $B$ both forms $\iota(v_a) \, d\log(z-w)$ and
$r_B \iota(\partial/\partial r_B) \, d\log(z-w)$ are proportional
to $r_B$. This factor cancels the denominator in $d r_B/r_B$, as required.
\end{proof}

\begin{rem}
The form $\alpha$ of Proposition \ref{prop:one_reg} is closed, $d \alpha=0$, and 
horizontal for the $S^1$-action generated by the vector field $v_B$, $\iota(v_B) \alpha=0$.
Hence, it is invariant under this circle action,
$$
L(v_B) \alpha = (d \iota(v_B) + \iota(v_B) d) \, \alpha = 0.
$$
\end{rem}

\begin{prop} \label{prop:many_reg}
Let $\Gamma$ be an admissible graph. 
Then, in every chart $U_i$ the form $\iota(\xi_i) \Omega_\Gamma^{{\rm log}}$
is regular.
\end{prop}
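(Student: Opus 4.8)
The plan is to combine Proposition \ref{prop:one_reg}, applied separately to each type I subset, with the observation that on the chart $U_i$ the form $\Omega_\Gamma^{\rm log}$ lives in the complex of forms with logarithmic (normal crossing) singularities along the type I boundary faces. Writing $\omega_e$ for the edge factor $\frac{1}{2\pi i}d\log\frac{z_{s(e)}-z_{t(e)}}{\bar z_{s(e)}-z_{t(e)}}$, the computation underlying Proposition \ref{prop:one_reg} shows that for a type I subset $B\in i$ one has $\omega_e=\frac{1}{2\pi i}\frac{dr_B}{r_B}+(\text{regular in }r_B)$ when both endpoints of $e$ lie in $B$, and $\omega_e$ is regular in $r_B$ otherwise; along a type II face the factor $r_C$ cancels between numerator and denominator, so $\omega_e$ is always regular there. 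Iterating this over all faces, $\Omega_\Gamma^{\rm log}=\prod_e\omega_e$ is a product of closed $1$-forms with at worst simple logarithmic poles along the normal crossing divisor $\bigcup_B\{r_B=0\}$ of type I faces, with coefficients smooth up to the boundary. Hence it admits an expansion
\[
\Omega_\Gamma^{\rm log}=\sum_{S}\Big(\bigwedge_{B\in S}\tfrac{dr_B}{r_B}\Big)\wedge\gamma_S,
\]
where $S$ runs over sets of type I subsets of $i$ and each $\gamma_S$ is regular on $U_i$ and free of $dr_B/r_B$ for $B\in S$. The one point requiring care here is that the leading coefficients of $z_{s(e)}-z_{t(e)}$ and $\bar z_{s(e)}-z_{t(e)}$ in the radial variables are nonvanishing on the closure of $U_i$, which must be extracted from the estimates of Lemma \ref{lem:Vprops} for $c$ sufficiently small.

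Next I would contract. Since each $v_{B'}$ annihilates every radial coordinate, $\iota(v_{B'})\frac{dr_B}{r_B}=0$, so $\iota(\xi_i)$ (with $\xi_i=\bigwedge_B v_B$ over the type I subsets) passes through the singular factors and acts only on the regular coefficients:
\[
\iota(\xi_i)\Omega_\Gamma^{\rm log}=\sum_S \pm\Big(\bigwedge_{B\in S}\tfrac{dr_B}{r_B}\Big)\wedge \iota(\xi_i)\gamma_S,
\]
with each $\iota(\xi_i)\gamma_S$ again regular because the $v_B$ are smooth vector fields on $U_i$. Thus $\iota(\xi_i)\Omega_\Gamma^{\rm log}$ again lies in the logarithmic complex, and its residue along a face $\{r_{B_0}=0\}$ is exactly the coefficient of $\frac{dr_{B_0}}{r_{B_0}}$.

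Finally I would kill these residues using Proposition \ref{prop:one_reg}. For a fixed type I subset $B_0$ write $\xi_i=\pm v_{B_0}\wedge\xi'$; then $\iota(\xi_i)\Omega_\Gamma^{\rm log}=\pm\iota(\xi')\,\iota(v_{B_0})\Omega_\Gamma^{\rm log}$. By Proposition \ref{prop:one_reg} the form $\iota(v_{B_0})\Omega_\Gamma^{\rm log}$ is regular in $r_{B_0}$, i.e.\ it has vanishing residue along $\{r_{B_0}=0\}$, and contracting further with $\xi'$ — whose vector fields are smooth and annihilate $r_{B_0}$ — cannot reintroduce a $dr_{B_0}/r_{B_0}$ pole. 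Hence the residue of $\iota(\xi_i)\Omega_\Gamma^{\rm log}$ along every type I face vanishes, while along type II faces $\iota(\xi_i)\Omega_\Gamma^{\rm log}$ is regular to begin with. A form in the logarithmic complex all of whose residues vanish is regular, so only the $S=\emptyset$ term survives in the expansion above and $\iota(\xi_i)\Omega_\Gamma^{\rm log}$ is regular, as claimed. The main obstacle I anticipate is precisely the first step: making the normal crossing/logarithmic structure of $\Omega_\Gamma^{\rm log}$ rigorous on the compactified chart — in particular the joint, rather than merely separate, smoothness of the coefficients $\gamma_S$ up to the corners — after which the contraction and residue bookkeeping is formal.
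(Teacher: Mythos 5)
Your proposal is correct and follows essentially the same route as the paper: apply Proposition \ref{prop:one_reg} to each type I subset $B$, and observe that contracting with the remaining fundamental vector fields (which are smooth and preserve the radial coordinates) cannot reintroduce a pole in $r_B$, so the fully contracted form is regular in every $r_B$ and hence regular. Your logarithmic-complex/residue packaging simply makes explicit the final step --- that separate regularity in each $r_B$, given the normal-crossing pole structure of $\Omega_\Gamma^{\rm log}$, implies joint regularity --- which the paper's three-line proof asserts without comment.
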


\begin{proof}
Let $r_1, \dots, r_k \geq 0$ be the coordinates corresponding
to the vertices of the tree defining the chart $U_i$. By Proposition \ref{prop:one_reg},
for every $B=1, \dots, k$ the form $\iota(v_B) \Omega_\Gamma^{\rm log}$ is regular
in $r_B$. Hence, the form
$$
\iota(\xi_i) \Omega_\Gamma^{\rm log} = 
\iota( v_1 \wedge \dots \wedge v_B \wedge \dots \wedge v_k) \Omega_\Gamma^{\rm log}
$$
is also regular in $r_B$. Since this argument applies to all $B = 1, \dots, k$ we conclude that
the form $\iota(\xi_i) \Omega_\Gamma^{\rm log}$ is regular on $U_i$.
\end{proof}

\begin{thm}\label{thm:formregular}
Let $\Gamma \in {\rm Graph}_{n,m}$ be an admissible graph such that $| E \Gamma|= 2n+m-2$.
Then, the form $\Omega_\Gamma^{{\rm log}}$ is regular.
\end{thm}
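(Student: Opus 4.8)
The plan is to prove regularity locally, exploiting the fact that in the present situation $\Omega_\Gamma^{{\rm log}}$ is a \emph{top}-degree form: its degree $|E\Gamma| = 2n+m-2$ equals $N:=\dim {\rm Conf}_{A,\sigma}$. Since regularity is a local statement, I would cover ${\rm Conf}_{A,\sigma}$ by the charts $U_i$ attached to $\sigma$-invariant nested families (Lemma \ref{lem:Vprops}); on the interior there is nothing to prove, so I fix a chart $U_i$ and examine the behavior near the boundary. In the iterated polar coordinates of $U_i$ the only variables that degenerate at the boundary are the radial coordinates $r_B\ge 0$, $B\in\underline i$, and each edge factor $\frac{1}{2\pi i}d\log\frac{z_{s}-z_{t}}{\bar z_{s}-z_{t}}$ contributes, for each cluster $B$ whose collapse affects $z_s-z_t$ or $\bar z_s-z_t$, at most a simple pole $\frac{dr_B}{r_B}$ plus a form smooth up to the boundary. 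Because $\frac{dr_B}{r_B}\wedge\frac{dr_B}{r_B}=0$, the form $\Omega_\Gamma^{{\rm log}}$ is, on $U_i$, a finite sum of terms $\bigwedge_{B\in S}\frac{dr_B}{r_B}\wedge(\text{smooth})$ with $S\subseteq\underline i$. Hence it suffices to show that for every $B\in\underline i$ the form $\Omega_\Gamma^{{\rm log}}$ has no $r_B^{-1}$ pole.

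For a subset $B$ of type II the collapsing cluster is $\sigma$-symmetric and shrinks to a point $\zeta_B\in\R$; then for an edge with $s,t\in B$ both $z_s-z_t$ and $\bar z_s-z_t$ scale like $r_B$, so the two $\frac{dr_B}{r_B}$ contributions in $d\log(z_s-z_t)-d\log(\bar z_s-z_t)$ cancel and that edge factor is already regular in $r_B$; thus $\Omega_\Gamma^{{\rm log}}$ has no $r_B^{-1}$ pole. The essential case is that of a type I subset $B$, where I would invoke Proposition \ref{prop:one_reg} to write $\Omega_\Gamma^{{\rm log}}=\frac{dr_B}{r_B}\wedge\alpha_B + (\text{regular in }r_B)$ with $\iota(v_B)\alpha_B=0$. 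Taking $\alpha_B=\lim_{r_B\to 0} r_B\,\iota(\partial/\partial r_B)\Omega_\Gamma^{{\rm log}}$ makes it automatic that $\iota(\partial/\partial r_B)\alpha_B=0$ as well. Here is where the top-degree hypothesis enters: $\alpha_B$ is a form of degree $N-1$ annihilated by $\iota(v_B)$ and $\iota(\partial/\partial r_B)$, two pointwise linearly independent vector fields (radial versus angular, the $S^1_B$-action being free). At each point $\alpha_B$ then lies in the exterior algebra of the $(N-2)$-dimensional annihilator of $v_B$ and $\partial/\partial r_B$, whose top degree is $N-2<N-1$; hence $\alpha_B\equiv 0$ and $\Omega_\Gamma^{{\rm log}}$ has no $r_B^{-1}$ pole.

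Combining the two cases, $\Omega_\Gamma^{{\rm log}}$ has no pole in any $r_B$, $B\in\underline i$, so it is regular up to the boundary on each $U_i$; since the charts cover the compactification, $\Omega_\Gamma^{{\rm log}}$ is regular, as claimed. The one genuinely nontrivial step is the vanishing of the residue $\alpha_B$ on type I strata: this is exactly where the equality $|E\Gamma|=2n+m-2$ is used, through the degree count $N-1>N-2$, and it explains why the logarithmic form — singular on codimension-one type I strata for graphs with $|E\Gamma|=2n+m-3$ — becomes regular precisely when it is of top degree. The only other point I would check carefully is the claim that every boundary singularity is a simple pole carrying a factor $dr_B$ (so that the vanishing of each $\alpha_B$ truly removes all singularities), which follows from the product structure of $\Omega_\Gamma^{{\rm log}}$ and the explicit expansion of $d\log$ in the iterated polar coordinates.
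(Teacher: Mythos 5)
Your proof is correct, and it shares its two essential ingredients with the paper's own proof --- Proposition \ref{prop:one_reg} and the top-degree hypothesis --- but it assembles them along a noticeably different route. The paper's argument is torus-global: since the $T_i$-action on the chart $U_i$ is free and $\Omega_\Gamma^{\rm log}$ is of top degree, one has the identity $\Omega_\Gamma^{\rm log}|_{U_i} = {\rm vol}_i\wedge\iota(\xi_i)\Omega_\Gamma^{\rm log}$, and Proposition \ref{prop:many_reg} asserts that the contraction $\iota(\xi_i)\Omega_\Gamma^{\rm log}$ is regular, so all clusters of the nested family are handled simultaneously. You instead work cluster by cluster and kill each type I residue $\alpha_B$ pointwise: it has degree $N-1$, where $N=2n+m-2$ is the dimension, and it is annihilated by contraction with the two pointwise independent vector fields $v_B$ and $\partial/\partial r_B$, hence lies in the exterior algebra of an $(N-2)$-dimensional annihilator and must vanish. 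Equivalently, the polar part $\frac{dr_B}{r_B}\wedge\alpha_B$ is a top-degree form killed by $\iota(v_B)$, hence zero --- which is exactly the linear algebra encoded in the paper's volume-form identity, so the underlying mechanism is the same. What your packaging buys: you bypass Proposition \ref{prop:many_reg} and the connection/volume forms $\theta_i$, ${\rm vol}_i$ entirely; you make visible precisely where $|E\Gamma|=2n+m-2$ enters (the inequality $N-1>N-2$); and you supply the explicit cancellation of $\frac{dr_B}{r_B}$ between $d\log(z_s-z_t)$ and $d\log(\bar z_s-z_t)$ on type II strata --- a point the paper leaves as ``easy to check,'' even though Propositions \ref{prop:one_reg} and \ref{prop:many_reg} only apply to clusters carrying a circle action, so this separate (if easy) argument is genuinely needed. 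What the paper's packaging buys: the operator ${\rm vol}_i\wedge\iota(\xi_i)(\cdot)$ is the same device on which the regularization and the regularized Stokes formula of Section \ref{sec:Stokes} are built, so its proof slots directly into that framework. Finally, both proofs share the same last bookkeeping step --- passing from ``no pole in each $r_B$ separately'' to regularity on $U_i$, using the normal-crossing, simple-pole-with-$dr_B$-factor structure of $\Omega_\Gamma^{\rm log}$ --- which the paper glosses inside Proposition \ref{prop:many_reg} and which you correctly single out as the point to verify carefully.
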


\begin{proof}
Let $U_i$ be a chart, $\theta_i \in \Omega(U_i, \mathfrak{t}_i)$ be a connection 1-form for
the $T_i$-action on $U_i$, and ${\rm vol}_i \in \Omega^{{\rm dim} \, \mathfrak{t}_i}(U_i)$ the volume form
on the orbits of the $T_i$-action, 
$$
{\rm vol}_i = \theta_i^k \, \wedge \dots \wedge \, \theta_i^1 .
$$
Since the $T_i$-action on $U_i$ is free and $\Omega_\Gamma^{{\rm log}}$ is a top degree form, its restriction to $U_i$ is given by
$$
\Omega_\Gamma^{{\rm log}} |_{U_i} = {\rm vol}_i \wedge \iota(\xi_i) \Omega_\Gamma^{{\rm log}}.
$$
By Proposition \ref{prop:many_reg},
the right-hand side is regular. We conclude that $\Omega_\Gamma^{{\rm log}}$ is regular
in all charts $U_i$. Hence, it is a regular form, as required.
\end{proof}

\begin{prop}
Let $\Gamma \in {\rm Graph}_{n,m}$ be an admissible graph such that $| E \Gamma|= 2n+m- 3$.
Then, in every chart $U_i$ the form $\Omega_\Gamma^{{\rm log}}$ admits a decomposition
$$
\Omega_\Gamma^{{\rm log}} = \sum_{B=1}^k \, \frac{d r_B}{ r_B} \, \wedge \alpha_B + {\rm regular \,\, terms} ,
$$
where $\iota(\xi_i) \alpha_B =0$ and $\iota(\xi_i) d\alpha_B =0$ for all $B$.
\end{prop}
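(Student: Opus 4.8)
The plan is to bootstrap the single–cluster result, Proposition~\ref{prop:one_reg}, to all clusters simultaneously by removing the poles $\tfrac{dr_B}{r_B}$ one at a time. Fix an ordering $B_1,\dots,B_k$ of the vertices of $\underline i$ and set $\Omega^{(0)}:=\Omega_\Gamma^{\rm log}$. At the $j$-th stage I apply Proposition~\ref{prop:one_reg} with the circle $v_{B_j}$ to $\Omega^{(j-1)}$, extracting the $r_{B_j}$–independent leading coefficient $\alpha_{B_j}$ of $\tfrac{dr_{B_j}}{r_{B_j}}$, and I set $\Omega^{(j)}:=\Omega^{(j-1)}-\tfrac{dr_{B_j}}{r_{B_j}}\wedge\alpha_{B_j}$, which is then regular in $r_{B_j}$. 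Since $\alpha_{B_j}$ inherits regularity in $r_{B_1},\dots,r_{B_{j-1}}$ from $\Omega^{(j-1)}$, the subtraction does not spoil the poles already removed, so after $k$ steps $\Omega^{(k)}$ is regular in every $r_B$ and plays the role of the regular remainder, giving $\Omega_\Gamma^{\rm log}=\sum_{B}\tfrac{dr_B}{r_B}\wedge\alpha_B+\Omega^{(k)}$.

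Proposition~\ref{prop:one_reg} guarantees that each $\alpha_{B}$ is closed and $v_{B}$–horizontal, i.e. $d\alpha_{B}=0$ and $\iota(v_{B})\alpha_{B}=0$, and these are exactly the two clauses I need. Indeed $d\alpha_B=0$ gives $\iota(\xi_i)d\alpha_B=0$ at once, while factoring $\iota(\xi_i)$ (the signed composition of the $\iota(v_C)$, $C\in\underline i$) through the factor $\iota(v_B)$ turns $\iota(v_B)\alpha_B=0$ into $\iota(\xi_i)\alpha_B=0$. It is important that this reasoning is purely algebraic and therefore unaffected by the fact that $\alpha_B$ is itself singular in the other coordinates: whenever two edges are internal to disjoint clusters $B,C$ the form carries a genuine double pole $\tfrac{dr_B}{r_B}\wedge\tfrac{dr_C}{r_C}$, which the ordering assigns to the earlier $\alpha_B$, and this is precisely why one can only demand that $\alpha_B$ be $\xi_i$–basic and not regular. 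As a consistency check, $\iota(\xi_i)\Omega_\Gamma^{\rm log}$ is regular by Proposition~\ref{prop:many_reg}, in agreement with $\iota(\xi_i)\alpha_B=0$.

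The delicate point, and the step I expect to be the main obstacle, is that from the second stage on I am applying Proposition~\ref{prop:one_reg} not to a product of logarithmic factors but to $\Omega^{(j-1)}$, the original form minus the earlier counterterms, so I must check that the hypotheses used in its proof survive the subtraction. Closedness survives because each $d\alpha_{B_l}=0$, and the simple–pole structure with $r_{B_j}$–independent leading term survives because every factor $d\log$ contributes $\tfrac{dr_B}{r_B}$ with constant coefficient exactly when the corresponding edge is internal to $B$, plus a jointly regular remainder. The real work is the horizontality input, namely that $\iota(v_{B_j})\Omega^{(j-1)}$ is regular in $r_{B_j}$; using $\iota(v_{B_j})\big(\tfrac{dr_{B_l}}{r_{B_l}}\wedge\alpha_{B_l}\big)=\tfrac{dr_{B_l}}{r_{B_l}}\wedge\iota(v_{B_j})\alpha_{B_l}$, this reduces to showing $\iota(v_{B_j})\alpha_{B_l}$ regular in $r_{B_j}$ for all earlier $l$. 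I expect to obtain this by re-running the computation underlying Proposition~\ref{prop:one_reg} — that $\iota\big(v_{B_j}-i\,r_{B_j}\tfrac{\partial}{\partial r_{B_j}}\big)$ annihilates every $d\log$ internal to $B_j$ and multiplies the remaining ones by $r_{B_j}$ — directly on the products of $d\log$'s and $\tfrac{dr}{r}$'s constituting $\alpha_{B_l}$, carrying this regularity as an additional clause of the induction. Interlocking these per-cluster regularity statements is the principal technical hurdle; once it is secured the induction closes and the decomposition with $\xi_i$–basic coefficients follows.
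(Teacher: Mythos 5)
Your iterative pole-subtraction scheme has a genuine gap, and it is precisely the one you flag yourself: the regularity of $\iota(v_{B_j})\Omega^{(j-1)}$ in $r_{B_j}$ (equivalently, of $\iota(v_{B_j})\alpha_{B_l}$ for $l<j$) is never established — you write that you ``expect'' to obtain it and call it the principal technical hurdle, which means the induction does not actually close. The step cannot be waved through, for two reasons. First, Proposition \ref{prop:one_reg} is not a black-box statement about closed forms with simple poles: its proof uses the specific product-of-$d\log$ structure of $\Omega_\Gamma^{\rm log}$ (the operator $\iota\bigl( v_{B} - i r_{B} \partial/\partial r_{B}\bigr)$ annihilates factors internal to $B$ and produces a factor $r_B$ on the others), and $\Omega^{(j-1)}$ loses this structure once counterterms are subtracted, so ``applying Proposition \ref{prop:one_reg} to $\Omega^{(j-1)}$'' is not licit as stated. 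Second, the needed regularity does not follow formally from the statements of Propositions \ref{prop:one_reg} and \ref{prop:many_reg}: writing $\Omega^{(l-1)} = \frac{dr_{B_l}}{r_{B_l}}\wedge\alpha_{B_l}+\gamma_l$, regularity of $\iota(v_{B_j})\Omega^{(l-1)}$ in $r_{B_j}$ does not let you isolate $\iota(v_{B_j})\alpha_{B_l}$, since $\gamma_l$ may still carry poles in $r_{B_j}$ and nothing in your hypotheses controls $\iota(v_{B_j})\gamma_l$; separating the two requires knowing that the singularities have clean joint product type, which is exactly the kind of explicit computation on the $d\log$ factors you defer.

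For contrast, the paper avoids the interlocking problem entirely by an algebraic device: since $\Omega_\Gamma^{\rm log}$ has degree top minus one and the $T_i$-action on $U_i$ is free, one has the pointwise identity
\[
\Omega_\Gamma^{\rm log} = \sum_{B=1}^k {\rm vol}_i^B\wedge\iota(\xi_i^B)\,\Omega_\Gamma^{\rm log} \;-\; (k-1)\,{\rm vol}_i\wedge\iota(\xi_i)\,\Omega_\Gamma^{\rm log},
\]
where $\xi_i^B = v_1\wedge\cdots\wedge\widehat{v_B}\wedge\cdots\wedge v_k$ and ${\rm vol}_i^B$ is the dual product of connection forms. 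The last term is regular by Proposition \ref{prop:many_reg}; in the $B$-th summand the contraction $\iota(\xi_i^B)$ already kills the poles in every $r_b$ with $b\neq B$ (Proposition \ref{prop:one_reg} applied to each $b\neq B$), and the remaining simple pole in $r_B$ is extracted by Proposition \ref{prop:one_reg} at $b=B$, yielding $\alpha_B={\rm vol}_i^B\wedge\iota(\xi_i^B)\alpha$, whose $\xi_i$-basicness follows from $\iota(v_B)\alpha=0$, $d\alpha=0$ and the $T_i$-invariance of the connection forms. The crucial feature is that Propositions \ref{prop:one_reg} and \ref{prop:many_reg} are only ever applied to the original form $\Omega_\Gamma^{\rm log}$, never to modified forms, so no new regularity statements need to be proven. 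To salvage your approach you would have to prove a joint polar expansion in all the $r_B$ simultaneously, with controlled contractions, by an explicit computation on the $d\log$ factors — at which point you would essentially have reconstructed what the identity above gives for free.
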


\begin{proof}
For $k=1$, the statement follows from Proposition \ref{prop:one_reg}.

For $k\geq 2$, denote by
$$
\xi_i^B = v_1 \wedge \dots \wedge \widehat{v_B} \wedge \dots \wedge v_k
$$
the polyvectors of degree $k-1$, and introduce the dual $(k-1)$-forms
$$
{\rm vol}_i^B = \theta_i^k \wedge \dots \wedge \widehat{\theta_i^B} \wedge \dots \wedge \theta_i^1 .
$$

Since the form $\Omega_\Gamma^{\rm log}$ is of top degree minus one, it admits a decomposition
$$
\Omega_\Gamma^{\rm log} = \sum_{B=1}^k {\rm vol}_i^B \wedge \iota(\xi_i^B) \Omega_\Gamma^{\rm log}
- (k-1) \, {\rm vol}_i \wedge \iota(\xi_i)  \Omega_\Gamma^{\rm log} .
$$
By Proposition \ref{prop:many_reg}, the last term on the right-hand side is regular. Consider one of the
terms in the sum,
$$
{\rm vol}_i^B \wedge \iota( v_1 \wedge \dots \wedge \widehat{v_B} \wedge \dots \wedge v_k) \, \Omega_\Gamma^{\rm log} .
$$
Applying Proposition \ref{prop:one_reg} to all the indices $b \neq B$ we see that this form is regular in $r_b, b \neq B$.
Using the same Proposition for $b=B$ we obtain
$$
{\rm vol}_i^B \wedge \iota( v_1 \wedge \dots \wedge \widehat{v_B} \wedge \dots \wedge v_k) 
\left( \frac{d r_B}{r_B} \wedge \alpha + {\rm terms\,\, regular \,\, in \,\,} r_B \right),
$$
where $\iota(v_B) \alpha =0$ and $d\alpha=0$. Using notation
$$
\alpha_B = {\rm vol}_i^B \wedge \iota( v_1 \wedge \dots \wedge \widehat{v_B} \wedge \dots \wedge v_k)  \, \alpha
$$
we obtain the required decomposition of $\Omega_\Gamma^{\rm log}$. By construction,
$$
\iota(\xi_i) \alpha_B = \iota( v_1 \wedge \dots \wedge v_B \wedge \dots \wedge v_k) \, \alpha =0.
$$
Furthermore, $L(v_B) \alpha_B =0$ since  $L(v_B) \theta_i^b=0$ for all $b$ and 
$L(v_B) \alpha=0$ by Proposition \ref{prop:one_reg}. Hence,
$\iota(v_B) d\alpha_B = L(v_B) \alpha_B - d \iota(v_B) \alpha_B =0$ and
$$
\iota(\xi_i) d \alpha_B=\iota(v_1 \wedge \dots \wedge v_B \wedge \dots \wedge v_k) \, d \alpha_B=0,
$$
 as required.

%\begin{align*}
%\iota(\xi_i) d\alpha_a
% &=
% \iota(\xi_i)(
% d{\rm vol}_i^a \wedge \iota( v_1 \wedge \dots \wedge \widehat{v_a} \wedge \dots \wedge v_k)  \, \alpha
% \pm 
% {\rm vol}_i^a \wedge d \iota( v_1 \wedge \dots \wedge \widehat{v_a} \wedge \dots \wedge v_k)  \, \alpha
 %)
 %\\&=
% \pm
 %\iota(v_a) d \iota( v_1 \wedge \dots \wedge \widehat{v_a} \wedge \dots \wedge v_k)  \, \alpha
 %\\&=
 %\pm L_{v_a} \iota( v_1 \wedge \dots \wedge \widehat{v_a} \wedge \dots \wedge v_k)  \, \alpha
 %\pm 
 %d\iota(\xi_i) \, \alpha
 %\\&=
  %\pm \iota( v_1 \wedge \dots \wedge \widehat{v_a} \wedge \dots \wedge v_k)  L_{v_a}\, \alpha
%\pm 0
%=0
%\end{align*}
%with $L_{v_a}=d\iota(v_a)+\iota(v_a)d$ denoting the Lie derivative. 
%In the derivation we used that $L_{v_b}\theta_c=0$ for all $a$, $c$, that $[L_{v_a},\iota(v_b)]=0$ for all $b$ and that $L_{v_a}\alpha=0$ by Proposition \ref{prop:one_reg}.

\end{proof}

%\begin{deflem}
%$\Gamma \in {\rm Graph}_{n}$ be an admissible graph. % such that $| E\Gamma | = 2n+m -3$.
%Let $v$ be the generating vector field of the canonical $S^1$ action on $Conf_n$ by rotations.
%Let $\theta$ be a connection, which we may pick to be basic under the $\R_+$ action by rescaling on $Conf_n$.
%Then the form
 %\[
 % \omega_\Gamma^{\rm log} = \theta \iota(v) \bigwedge_{(i,j)\in E\Gamma} d \frac {\log(z_i-z_j)}{2\pi i} \, \in \Omega(Conf_n)
 %\]
%is basic under the $\R_+$ action by scaling and hence descends to a differntial form which we also denote by $\omega_\Gamma$ on the quotient $Conf_n/\R_+$. If this form is a top form, i.~e., if $| E\Gamma | = 2n+m -3$, then it is regular and extends to the compactification $\FM_2(n)$.
%We define the form $\omega_\Gamma\in \Omega(\FM_2(n))$ to be
%\end{deflem}
%\begin{proof}
 %\end{proof}

Let $\Gamma \in {\rm Graph}_{n,m}$ be an admissible graph, $U_i$ be a chart and $B \in i$ be a vertex of the tree defining
$\Gamma$. We shall use the notation $\partial_B \Gamma = \Gamma' \cup \Gamma''$, where 
$\Gamma' \subset \Gamma$ is the subgraph corresponding to the vertices which belong to the subset $B$, and $\Gamma''$ is the graph obtained from $\Gamma$ by contracting the subgraph $\Gamma'$.

\begin{prop}  \label{prop:restrict}
Let $\Gamma \in {\rm Graph}_{n,m}$ be an admissible graph.
%such that $| E\Gamma | = 2n+m -3$.
Choose a chart $U_i$ and a vertex $B$ of the tree defining $U_i$. Let $ \partial_B U_i$ be the co-dimension
one stratum of the boundary of $U_i$ corresponding to $B$ and denote  $\partial_B \Gamma = \Gamma' \cup \Gamma''$.
Then, 
$$
\iota(v_B) \Omega^{\rm log}_\Gamma |_{\partial_B U_i} = \iota(v_B) \Omega^{\rm log}_{\Gamma'} |_{\partial_B U_i} 
\wedge \Omega^{\rm log}_{\Gamma''} |_{\partial_B U_i} .
$$

%
%$$
%{\rm Reg}_a \, \Omega_\Gamma^{\rm log} = \Omega_{\partial_a \Gamma}^{\rm log}
%:=
%\begin{cases}
% \omega_{\Gamma'}^{\rm log}\wedge \Omega_{\Gamma''}^{\rm log}
% & \text{if $ \partial_a U_i$ is a type I (interior) boundary stratum } \\
% \Omega_{\Gamma'}^{\rm log}\wedge \Omega_{\Gamma''}^{\rm log}
% & \text{if $ \partial_a U_i$ is a type II boundary stratum }
%\end{cases}.
%$$
\end{prop}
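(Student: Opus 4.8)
The plan is to factor $\Omega^{\rm log}_\Gamma$ according to whether an edge lies inside the collapsing block $B$, apply the Leibniz rule for the contraction $\iota(v_B)$, and show that only one of the two resulting terms survives restriction to the boundary stratum $\partial_B U_i=\{r_B=0\}$. (Note that $v_B$ exists only for a type I block $B$, which is precisely the case in which the statement is meaningful and the case relevant for the $dr/r$ singularities.) Concretely, split the edge set $E\Gamma=E'\sqcup E''$, where $E'$ consists of the edges with both endpoints in $B$ (these are exactly the edges of $\Gamma'$) and $E''$ of the remaining edges (which become the edges of the contracted graph $\Gamma''$). Ordering the factors so that the edges of $E'$ come first, so that no reordering sign arises, we obtain on the chart $U_i$ the exact factorization
\[
\Omega^{\rm log}_\Gamma = \Omega^{\rm log}_{\Gamma'} \wedge \omega'',
\qquad
\omega'' = \prod_{e \in E''} \frac{1}{2\pi i}\, d\log\frac{z_{s(e)} - z_{t(e)}}{\bar z_{s(e)} - z_{t(e)}},
\]
where $\Omega^{\rm log}_{\Gamma'}=\prod_{e\in E'}(\cdots)$ is precisely the logarithmic form of the subgraph $\Gamma'$. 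Since every edge of $E''$ has an endpoint outside $B$, the argument of each $d\log$ tends to a nonzero value as $r_B\to 0$; hence $\omega''$ is regular in $r_B$, and its boundary restriction is obtained by sending all points $z_b$, $b\in B$, to $\zeta_B=\zeta_\beta$, so that $\omega''|_{\partial_B U_i}=\Omega^{\rm log}_{\Gamma''}|_{\partial_B U_i}$.

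Next I would apply the Leibniz rule
\[
\iota(v_B)\Omega^{\rm log}_\Gamma
=
\big(\iota(v_B)\Omega^{\rm log}_{\Gamma'}\big)\wedge \omega''
\;+\;(-1)^{|E'|}\,
\Omega^{\rm log}_{\Gamma'} \wedge \big(\iota(v_B)\omega''\big),
\]
and restrict to the boundary. By Proposition \ref{prop:one_reg} applied to the subgraph $\Gamma'$, the form $\iota(v_B)\Omega^{\rm log}_{\Gamma'}$ is regular in $r_B$, so the first term restricts to $\big(\iota(v_B)\Omega^{\rm log}_{\Gamma'}\big)|_{\partial_B U_i}\wedge \Omega^{\rm log}_{\Gamma''}|_{\partial_B U_i}$, which is the claimed right-hand side. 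The content of the proof is therefore to show that the second term vanishes upon restriction to $\partial_B U_i$.

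For this I would invoke two facts from the proof of Proposition \ref{prop:one_reg}. First, because each edge of $E''$ has an endpoint outside $B$, the contraction $\iota(v_B)$ of each such $d\log$-factor is proportional to $r_B$; by the Leibniz rule $\iota(v_B)\omega''$ is then itself proportional to $r_B$, in particular regular and vanishing at $r_B=0$. Second, Proposition \ref{prop:one_reg} gives the decomposition $\Omega^{\rm log}_{\Gamma'}=\tfrac{dr_B}{r_B}\wedge\alpha+(\text{regular})$. Wedging, $\Omega^{\rm log}_{\Gamma'}\wedge\iota(v_B)\omega''$ has a regular part proportional to $r_B$ together with a part proportional to $dr_B$ (arising from $\tfrac{dr_B}{r_B}\wedge r_B(\cdots)$). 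Pulling back to the codimension-one submanifold $\{r_B=0\}$ annihilates both: the first because $r_B=0$ there, the second because $dr_B$ pulls back to zero. Hence the second Leibniz term restricts to zero, and the identity follows.

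The main obstacle is the bookkeeping around the singular factor $\Omega^{\rm log}_{\Gamma'}$: it cannot be restricted to the boundary naively, so one must argue at the level of $\iota(v_B)\Omega^{\rm log}_\Gamma$, which is regular by Proposition \ref{prop:one_reg}, rather than term by term. The decisive point making the argument work is that the product of the $\tfrac{dr_B}{r_B}$ singularity of $\Omega^{\rm log}_{\Gamma'}$ with the $O(r_B)$ vanishing of $\iota(v_B)\omega''$ produces only a $dr_B$-term, which is killed by pullback to $\partial_B U_i$; once this is seen, everything reduces to the regularity statements already proved in Proposition \ref{prop:one_reg}.
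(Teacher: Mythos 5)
Your proposal is correct and follows essentially the same route as the paper's proof: factor $\Omega^{\rm log}_\Gamma$ into the edge forms of $\Gamma'$ and the remaining edges, apply the Leibniz rule for $\iota(v_B)$, and kill the cross term using the decomposition $\Omega^{\rm log}_{\Gamma'}=\tfrac{dr_B}{r_B}\wedge\beta+\gamma$ from Proposition \ref{prop:one_reg} together with the $O(r_B)$ vanishing of $\iota(v_B)$ applied to edge forms with an endpoint outside $B$, so that only $dr_B$- and $r_B$-proportional terms remain, both of which die on $\{r_B=0\}$. Your treatment is in fact slightly more scrupulous than the paper's in distinguishing the product of non-collapsed edge forms on the chart from its boundary restriction $\Omega^{\rm log}_{\Gamma''}|_{\partial_B U_i}$, which the paper conflates notationally.
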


\begin{proof}
By Proposition \ref{prop:one_reg}, the forms $\iota(v_B) \Omega^{\rm log}_\Gamma$ and $\iota(v_B) \Omega^{\rm log}_{\Gamma'}$
are regular in $r_B$, and there is a decomposition
$$
\Omega^{\rm log}_{\Gamma'} = \frac{d r_B}{r_B} \wedge \beta + \gamma,
$$
where $\beta$ is independent of $r_B$,  $\gamma$ is regular in $r_B$ and $\iota(v_B) \beta =0$

The form $\Omega^{\rm log}_{\Gamma''}$ is also regular in $\rho_B$. Moreover, it is a product of 1-forms
$d \log(z-w)$ where at most one of the points $z$ and $w$ belongs to the collapsing set labeled by $a$. Hence,
$$
\iota(v_B) \,\,d \log(z-w) = r_B  \cdot \, {\rm a \,\, function \,\, regular \,\, in \,\,} r_B
$$
and $\iota(v_B) \Omega^{\rm log}_{\Gamma''} = r_B \alpha$, where $\alpha$ is a form regular in $r_B$. 

We compute
$$
\begin{array}{lll}
\iota(v_B) \Omega^{\rm log}_\Gamma & = & 
\iota(v_B) \Omega^{\rm log}_{\Gamma'} \wedge \Omega^{\rm log}_{\Gamma''} \pm 
\Omega^{\rm log}_{\Gamma'} \wedge \iota(v_B)  \Omega^{\rm log}_{\Gamma''}  \\
& = & \iota(v_B) \Omega^{\rm log}_{\Gamma'} \wedge \Omega^{\rm log}_{\Gamma''}  \pm
\left(  \frac{d r_B}{r_B} \wedge \beta + \gamma \right) \wedge \, r_B  \alpha \\
& = & 
\iota(v_B) \Omega^{\rm log}_{\Gamma'} \wedge \Omega^{\rm log}_{\Gamma''}  \pm
d r_B \wedge \beta  \wedge  \alpha + r_B \gamma \wedge  \alpha.
\end{array}
$$
The last two terms in the last line are proportional to $r_B$ and $d r_B$. Hence, they vanish
when restricted to $\partial_B U_i$, as required.

\end{proof}

\begin{thm}\label{thm:vanishingproperty}
Let $\Gamma \in {\rm Graph}_{n,m}$ with $|E\Gamma| = 2n +m-3$, 
$U_i$ be a coordinate chart on the corresponding configuration space
and $B$ be a vertex of the tree defining $U_i$. If $\partial_B U_i$ is a type I (interior) boundary stratum,
then
$$
{\rm Reg}_B \, (\Omega^{\rm log}_\Gamma) =
\begin{cases}
\frac{d\phi}{2\pi} \wedge \Omega^{\rm log}_{\Gamma''} |_{\partial_B U_i} 
& \text{ if $V\Gamma' = \{ x, y\}$ and $E\Gamma'$ is an edge connecting $x$ and $y$} \, , \\
0 & \text{otherwise} \, ,
\end{cases}
$$
where $\phi$ is the natural coordinate on ${\rm Conf}_2 \cong S^1$.
If $\partial_a U_i$ is a type II boundary stratum, then
$$
{\rm Reg}_B \, (\Omega^{\rm log}_\Gamma) = \Omega^{\rm log}_{\Gamma'} |_{\partial_B U_i} 
\wedge \Omega^{\rm log}_{\Gamma''} |_{\partial_B U_i} \, .
$$
\end{thm}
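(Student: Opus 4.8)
The plan is to compute the regularization $\mathrm{Reg}_B(\Omega^{\mathrm{log}}_\Gamma)$ on the boundary stratum $\partial_B U_i$ by exploiting the formula established in the Proposition following Theorem \ref{thm:regStokes}, namely that on a codimension-one stratum the regularization is recovered from the contraction against the fundamental vector field:
\[
\mathrm{Reg}_B(\Omega^{\mathrm{log}}_\Gamma)\mid_{\partial_B U_i} = \eta_B \wedge (\iota(v_B)\Omega^{\mathrm{log}}_\Gamma)\mid_{\partial_B U_i},
\]
where $\eta_B = d\phi/(2\pi)$ is the normalized coframe dual to $v_B$ along the $S^1_B$-orbit. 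Here $v_B$ generates the circle rotation about $\zeta_B$, and $\phi$ is the angular coordinate on the collapsing configuration. Combined with Proposition \ref{prop:restrict}, which gives the factorization $\iota(v_B)\Omega^{\mathrm{log}}_\Gamma\mid_{\partial_B U_i} = \iota(v_B)\Omega^{\mathrm{log}}_{\Gamma'}\mid_{\partial_B U_i} \wedge \Omega^{\mathrm{log}}_{\Gamma''}\mid_{\partial_B U_i}$, the whole computation reduces to understanding the single factor $\iota(v_B)\Omega^{\mathrm{log}}_{\Gamma'}$ restricted to the boundary, for the subgraph $\Gamma'$ sitting on the collapsing cluster.

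First I would treat the type I case, where $B \subset A_+$ is a cluster in the open upper half-plane collapsing to its center of mass $\zeta_B$. In the rescaled coordinates $z_b = \zeta_B + r_B z^{(1)}_b$ the boundary $r_B = 0$ is the configuration space $\mathrm{Conf}^{\mathrm{open}}_B$ of the collapsing points, and in this limit the propagator $\frac{1}{2\pi i}d\log\frac{z_{s(e)} - z_{t(e)}}{\bar z_{s(e)} - z_{t(e)}}$ for an edge internal to $\Gamma'$ degenerates: because both endpoints collapse to the same real-line-distant point $\zeta_B$, the denominator $\bar z_{s(e)} - z_{t(e)}$ tends to a nonzero constant $\bar\zeta_B - \zeta_B$, so up to regular terms the edge form becomes $\frac{1}{2\pi i}d\log(z_{s(e)} - z_{t(e)}) = \frac{1}{2\pi}d\arg(z_{s(e)} - z_{t(e)})$, i.e. the Kontsevich angle form on the \emph{vacuum} configuration $\mathrm{Conf}^{\mathrm{open}}_B$. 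Thus $\iota(v_B)\Omega^{\mathrm{log}}_{\Gamma'}\mid_{r_B = 0}$ is the contraction of a product of angle forms on $\mathrm{Conf}_{|B|}$ against the overall rotation vector field, integrated against $d\phi/(2\pi)$. The key number-theoretic input is then the Kontsevich Vanishing Lemma: the integral of a product of $2|B|-3$ angle forms over $\mathrm{Conf}_{|B|}$ with a free rotation vanishes unless $|B| = 2$, in which case $\mathrm{Conf}_2 \cong S^1$ with coordinate $\phi$ and the single angle form integrates to give the factor $d\phi/(2\pi)$. This forces the case distinction in the statement: only when $\Gamma'$ is a single edge on two vertices does the contribution survive, producing $\frac{d\phi}{2\pi}\wedge\Omega^{\mathrm{log}}_{\Gamma''}\mid_{\partial_B U_i}$, and otherwise the dimension count or the vanishing of the rotation-contracted form yields zero.

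Next I would handle the type II case, where the cluster $C$ is $\sigma$-invariant and collapses to a point $\zeta_C \in \R$ on the real line. Here there is no surviving circle action (as noted in the discussion of type II strata), so $T_i$ does not act and the form $\Omega^{\mathrm{log}}_\Gamma$ already extends regularly across $\partial_B U_i$ without any counterterm; the regularization is simply the honest boundary restriction. In the rescaled coordinates the denominators $\bar z_{s(e)} - z_{t(e)}$ no longer stay bounded away from zero, but precisely because the cluster sits on the real line the logarithmic propagators retain their full form in the limit, and the restriction factorizes geometrically as the boundary of $U_i$ splits as $\mathrm{Conf}^{\mathrm{open}}_{A\setminus C \cup *} \times \mathrm{Conf}^{\mathrm{open}}_{C,\sigma}$. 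This gives directly $\mathrm{Reg}_B(\Omega^{\mathrm{log}}_\Gamma) = \Omega^{\mathrm{log}}_{\Gamma'}\mid_{\partial_B U_i}\wedge\Omega^{\mathrm{log}}_{\Gamma''}\mid_{\partial_B U_i}$.

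The main obstacle I anticipate is the type I vanishing argument. Establishing that $\iota(v_B)\Omega^{\mathrm{log}}_{\Gamma'}$ restricted to the boundary is the contraction of ordinary angle forms — and that its integral over the collapsing fiber $\partial_B U_i / S^1_B \cong \mathrm{FM}_B$ vanishes for $|B| \geq 3$ — requires care: one must confirm that the logarithmic propagators degenerate to the $d\arg$ angle forms uniformly on the compactified fiber (so that the regular-terms do not contribute after restriction), and then invoke the rotational symmetry of the vacuum configuration to kill the integral. The $|B| = 2$ surviving case needs the explicit identification $\mathrm{Conf}_2 \cong S^1$ and the normalization $\int_{S^1} d\phi/(2\pi) = 1$ matching the coframe $\eta_B$, so that the two factors of $d\phi/(2\pi)$ do not double-count. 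Verifying this compatibility between the regularization coframe and the angle coordinate is the delicate point that ties the whole computation together.
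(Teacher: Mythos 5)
Your treatment of the type II strata and of the type I stratum with $|V\Gamma'|=2$ follows the paper's route (regularity of the forms plus factorization via Proposition \ref{prop:restrict}, then an explicit computation in coordinates such as $z=\zeta+r_Be^{i\phi}$, $w=\zeta-r_Be^{i\phi}$), but the vanishing case --- the heart of the theorem --- contains a genuine error. You assert that on the collapsing fiber the logarithmic propagator degenerates to the Kontsevich angle form, via the identity $\frac{1}{2\pi i}d\log(z_{s}-z_{t})=\frac{1}{2\pi}d\arg(z_{s}-z_{t})$. This identity is false: $\frac{1}{2\pi i}d\log(z-w)=\frac{1}{2\pi}d\arg(z-w)+\frac{1}{2\pi i}d\log|z-w|$, and the extra real term $d\log|z-w|$ does not vanish. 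The whole point of the logarithmic propagator is that it is holomorphic rather than an angle form; if the two coincided on the fibers, the logarithmic weights would coincide with Kontsevich's, which they do not (the Duflo function of $\mU^{\rm log}$ involves odd zeta values, unlike Kontsevich's). Consequently your appeal to the Kontsevich Vanishing Lemma cannot be made: that lemma concerns products of $d\arg$ forms, and its analogue for holomorphic logarithmic forms is precisely what is at stake in this theorem --- one cannot cite it to prove itself. Note also that the statement asserts pointwise vanishing of ${\rm Reg}_B(\Omega^{\rm log}_\Gamma)$ on the stratum, which is stronger than vanishing of an integral over the collapsing fiber, so even a correct integral-level vanishing argument would not suffice as stated.

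The paper's actual mechanism is different and more elementary. One expands each edge form as $\frac{1}{2\pi i}d\log(z-w)-\frac{1}{2\pi i}d\log(z-\bar w)$ and writes $\Omega^{\rm log}_{\Gamma'}=\sum_j\alpha_j\wedge\beta_j$, where each $\alpha_j$ is a product of the holomorphic factors $d\log(z-w)$ and each $\beta_j$ is a product of the factors $d\log(z-\bar w)$, which are regular at a type I collapse. Repeating the argument of Proposition \ref{prop:restrict} gives $\iota(v_B)\Omega^{\rm log}_{\Gamma'}|_{\partial_B U_i}=\sum_j(\iota(v_B)\alpha_j)|_{\partial_B U_i}\wedge\beta_j|_{\partial_B U_i}$. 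The form $\iota(v_B)\alpha_j$ is $S^1_B$-basic, hence descends to the quotient ${\rm FM}_k={\rm Conf}_k/S^1$, a complex manifold of complex dimension $k-2$; being holomorphic, its degree is at most $k-2$, strictly less than the real top degree $2(k-2)$ whenever $k>2$. Hence the top-degree part of ${\rm Reg}_B(\Omega^{\rm log}_\Gamma)=\theta\wedge\iota(v_B)\Omega^{\rm log}_\Gamma|_{\partial_B U_i}$ vanishes pointwise. This holomorphic dimension count is exactly what replaces Kontsevich's Vanishing Lemma in the logarithmic setting, and it is the idea your proposal is missing; without it (or an equivalent substitute), the ``otherwise'' case of the theorem does not go through.
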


\begin{proof}
In the case of type II boundary strata, the forms $\Omega^{\rm log}_\Gamma, \Omega^{\rm log}_{\Gamma'}$
and $\Omega^{\rm log}_{\Gamma''}$ are regular in $r_B$. Hence, 
$$
{\rm Reg}_B \, (\Omega^{\rm log}_\Gamma) =  \Omega^{\rm log}_{\Gamma} |_{\partial_B U_i}
= \Omega^{\rm log}_{\Gamma'} |_{\partial_B U_i} 
\wedge \Omega^{\rm log}_{\Gamma''} |_{\partial_B U_i} \, .
$$

For a type I boundary stratum with $V \Gamma' = \{ x, y\}$ and $E\Gamma'$ being a single edge connecting
$y$ to $x$ denote coordinates of the corresponding points in the upper half plane by $z$ and $w$. Then,
$$
\Omega^{\rm log}_{\Gamma'} = \frac{1}{2\pi i} \,\, d \, \log\left( \frac{z-w}{z - \overline{w}} \right) .
$$
Using the parametrization $z= \zeta+ r_B e^{i\phi}, w = \zeta - r_B e^{i\phi}$ we get
$$
\iota(v_B) \Omega^{\rm log}_{\Gamma'} = \frac{1}{2\pi}  - \frac{2 r_B \sin(\phi)}{2\pi i(\zeta - \overline{\zeta} + 2r_B \cos(\phi))} .
$$
By Proposition \ref{prop:restrict}, we have
$$
\iota(v_B) \Omega^{\rm log}_\Gamma |_{\partial_B U_i} = \iota(v_B) \Omega^{\rm log}_{\Gamma'} |_{\partial_B U_i} 
\wedge \Omega^{\rm log}_{\Gamma''} |_{\partial_B U_i} = \frac{1}{2\pi} \,  \Omega^{\rm log}_{\Gamma''} |_{\partial_B U_i}.
$$
Choosing the connection for the $S^1$-action $\theta = d \phi$, we arrive at
$$
{\rm Reg}_B \, (\Omega^{\rm log}_\Gamma) = \theta \wedge \iota(v_B) \Omega^{\rm log}_\Gamma |_{\partial_B U_i} =
\frac{d\phi}{2\pi} \, \wedge \, \Omega^{\rm log}_{\Gamma''} |_{\partial_B U_i}.
$$

Finally, consider the case of a type I boundary stratum with $V\Gamma'$ containing $k>2$ vertices.
For each edge $e \in E\Gamma'$, we decompose the corresponding 1-form as
$$
\frac{1}{2\pi i} \,\, d \, \log\left( \frac{z-w}{z - \overline{w}} \right) = \frac{1}{2\pi i} \, d \, \log(z-w)
- \frac{1}{2\pi i} \, d \,  \log(z-\overline{w}).
$$
The logarithmic form $\Omega^{\rm log}_\Gamma$ is represented as a sum of terms 
$$
\Omega^{\rm log}_{\Gamma'} = \sum_j \alpha_j \wedge \beta_j,
$$
where $\alpha_k$'s are wedge products of holomorphic 1-forms of the type $d \, \log(z-w)$,
and $\beta_k$'s are wedge products of 1-forms of the type $d\, \log(z-\overline{w})$.
By repeating the argument in the proof of Proposition \ref{prop:restrict}, we obtain
$$
\iota(v_B) \Omega^{\rm log}_{\Gamma'}|_{\partial_B U_i} = \sum_j \, (\iota(v_B) \alpha_j)|_{\partial_B U_i}
\wedge \beta_j |_{\partial_B U_i}.
$$
%Note that the 1-form $d\, \log(z-\overline{w})$ is regular in $\rho_a$ and
%
%$$
%d\, \log(z-\overline{w})|_{\partial_a U_i} = d\, \log(\zeta - \overline{\zeta}),
%$$
%where $\zeta$ is the point to which the cluster with label $a$ collapses. Hence, to be non vanishing, the term
%$\beta_k$ must contain at most one factor $d\, \log(z-\overline{w})$.

Consider the factor $\iota(v_B)\alpha_j$. It is invariant under the action of $S^1_B$,
and it is horizontal since $\iota(v_B)(\iota(v_B) \alpha_j) = \iota(v_B)^2 \alpha_j =0$. Hence, $\iota(v_B) \alpha_j$
is basic and descends to the quotient ${\rm FM}_k={\rm Conf}_k/S^1$. The quotient space ${\rm FM}_k$ has a complex structure induced
by the one of $\mathbb{C}^k$, its complex dimension is $k-2$ and the real dimension is $2(k-2)$. 
The form $\iota(v_B) \alpha_j$ is holomorphic and hence its degree is at most $k-2$, its top degree part
vanishes. Since $|E\Gamma| = 2n+m -3$, the form
$$
{\rm Reg}_B \, (\Omega^{\rm log}_{\Gamma}) = \theta \wedge \iota(v_B) \Omega^{\rm log}_{\Gamma}|_{\partial_B U_i}
$$
is of top degree on $\partial_B U_i$. Hence, it vanishes, as required.

\end{proof}

\section{The formality morphism}   \label{sec:final}

In this Section, we construct the logarithmic formality morphism $\mU^{{\rm log}}$ and prove its globalization property.

\subsection{Recollection: Kontsevich's formality morphism}
Kontsevich's construction \cite{K1} of the $L_\infty$-quasi-isomorphism
\[
 \mU\colon \Tpoly M\to \Dpoly M
\]
between the graded Lie algebra of multivector fields $\Tpoly M$ and the differential graded Lie algebra of multi-differential operators $\Dpoly M$ on a smooth manifold $M$ proceeds in two steps. First, for $M=\R^d$ one defines the $n$-th component of the formality morphism as 
\[
 \mU_n(\gamma_1,\dots, \gamma_n) 
 =
 \sum_{\Gamma\in {\rm Graph}_{n,m}}
 \varpi_\Gamma D_\Gamma(\gamma_1,\dots, \gamma_n)
\]
where $\gamma_1,\dots, \gamma_n\in \Tpoly$, $m=|\gamma_1|+\cdots +|\gamma_n|+2-n$, the sum is over the set of admissible graphs ${\rm Graph}_{n,m}$ and the $D_\Gamma(\dots)$ is an $m$-differential operator naturally associated to such a graph.
%Concretely, an admissible graph $\Gamma\in G_{m,n}$ is a directed graph with $n$ type I and $m$ type II vertices without short loops and such that no edge starts at a type II vertex. 
One fixes an arbitrary ordering on the set of edges $E\Gamma$ for each admissible graph $\Gamma$. 

The coefficients $\varpi_\Gamma\in \mathbb{R}$ are defined through configuration space integrals.
\[
 \varpi_\Gamma = \int_{{\rm Conf}_{n,m}} \Omega_\Gamma
\]
The top degree form in the integrand is defined by \eqref{eq:1}.
Here the product is taken in the order fixed on the set of edges, thus resolving the sign ambiguity.
As mentioned in the introduction, the statement that $\mU$ is an $L_\infty$-morphism then translates into a set of quadratic identities to be satisfied by the coeffcients $\varpi_\Gamma$. It turns out that these quadratic equations are exactly the quadratic equations obtained by using Stokes' formula for graphs $\Gamma$ with $|E\Gamma| = 2n+m -3$
\begin{equation}
 \label{equ:KStokes}
0=\int_{{\rm Conf}_{n,m}} d\Omega_\Gamma
 =
 \int_{\partial \, {\rm Conf}_{n,m}} \Omega_\Gamma
 =
 \sum_{i} \int_{\partial_i \, {\rm Conf}_{n,m}} \Omega_\Gamma 
\end{equation}
provided that the following vanishing property holds:

\vskip 3mm 

{\bf Vanishing Lemma, \cite[Lemma 6.6]{K1}:} The contribution of the type I boundary strata 
%{\color{red}(TODO: fix name)} 
on the right-hand side of \eqref{equ:KStokes} vanishes if more than 2 points collapse. 

\vskip 3mm

Furthermore, one checks that the component $\mU_1$ is exactly the Hochschild-Kostant-Rosenberg morphism, and hence $\mU$ is indeed a quasi-isomorphism, completing the first step of the construction.

In the second step, one globalizes the formality result from $\R^d$ to general smooth manifolds $M$. As shown in \cite{K1} a formality morphism given by universal formulas (i.~e., expressible through linear combinations of admissible graphs) can be globalized if it satisfies the following vanishing properties:\footnote{In fact, there are five properties in \cite{K1} to be satisfied. However, the others are trivially true for a formality morphism given by universal formulas such that $\mU_1$ is the Hochschild-Kostant-Rosenberg morphism.}

\vskip 3mm

{\bf Kontsevich globalization conditions \cite[section 7]{K1}:}\nopagebreak
\begin{enumerate}
 \item For any vector fields $\xi_1,\xi_2$ we have $\mU_2(\xi_1, \xi_2)=0$.
 \item For any linear vector field $\xi$ and any multivector fields $\gamma_2,\dots,\gamma_n$ we have $\mU_n(\xi,\gamma_1, \dots, \gamma_n)=0$.
\end{enumerate}

\vskip 3mm

\subsection{Logarithmic formality}
One can construct a new formality morphism $\mU^{\rm log}$ using Kontsevich's technique outlined in the preceding subsection, but using the logarithmic weight form 
\[
\Omega^{{\rm log}}_\Gamma
=
\prod_{(i,j)\in E\Gamma} \frac{1}{2\pi i} \, d\log\left( \frac{z_i-z_j}{\bar z_i-z_j} \right)
\]
in place of the form $\Omega_\Gamma$ above. Concretely, for an admissible graph $\Gamma$ with $|E\Gamma| = 2n + m -2$ we denote by
\begin{equation}\label{equ:varpilog}
\varpi^{{\rm log}}_\Gamma = \int_{{\rm Conf}_{n,m}} \, \Omega^{{\rm log}}_\Gamma
\end{equation}
the corresponding logarithmic weights. 
By Theorem \ref{thm:formregular}, the integrand on the right-hand side has no singularities on the boundary, and the integral is well-defined.
The numbers $\varpi^{{\rm log}}_\Gamma$ satisfy quadratic identities obtained by applying the regularized Stokes Theorem, i.~e. Theorem \ref{thm:regStokes} to the differential forms of top minus one degree $\Omega^{{\rm log}}_\Gamma$ for admissible graphs $\Gamma$ such that $|E\Gamma| = 2n + m -3$:
\[
 0 = \int_{{\rm Conf}_{n,m}} d\Omega^{{\rm log}}_\Gamma
 =
 \int_{\partial \, {\rm Conf}_{n,m}} {\rm Reg}\, \Omega^{{\rm log}}_\Gamma.
\]
By Theorem \ref{thm:vanishingproperty}, the Kontsevich vanishing property is still satisfied, and hence we can define an $L_\infty$.morphism 
\[
 \mU^{\rm log} \colon \Tpoly \to \Dpoly
\]
by setting
\[
 \mU^{\rm log}_n(\gamma_1,\dots, \gamma_n) 
 =
 \sum_{\Gamma\in {\rm Graph}_{n,m}}
 \varpi^{\rm log}_\Gamma D_\Gamma(\gamma_1,\dots, \gamma_n).
\]

Note that the logarithmic one form associated to an edge agrees with the Kontsevich one form if one of the two arguments is real. In particular this means that for graphs $\Gamma$ with a single type I vertex $\Omega_\Gamma=\Omega_\Gamma^{\rm log}$ and hence $\varpi^{\rm log}_\Gamma=\varpi_\Gamma$.
It follows that $\mU^{\rm log}_1=\mU_1$ agrees with the Hochschild-Kostant-Rosenberg morphism. This means in particular that the $L_\infty$-morphism $\mU^{\rm log}$ is a quasi-isomorphism.

Furthermore, the morphism $\mU^{\rm log}$ can be globalized according to \cite{K1}, as the following proposition shows.

\begin{prop}
 The formality morphism $\mU^{\rm log}$ satisfies the Kontsevich globalization conditions, i.~e., 
 \begin{itemize}
  \item For any two vector fields $\xi_1,\xi_2$ we have $\mU^{\rm log}_2(\xi_1, \xi_2)=0$.
 \item For any linear vector field $\xi$ and any multivector fields $\gamma_2,\dots,\gamma_n$: $\mU^{\rm log}_n(\xi,\gamma_1, \dots, \gamma_n)=0$.
  \end{itemize}
\end{prop}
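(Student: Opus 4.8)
The plan is to verify the two Kontsevich globalization conditions by analyzing the relevant graphs directly, exploiting the fact that the logarithmic propagator agrees with the Kontsevich propagator whenever one endpoint is real, together with the degree and symmetry properties established in the preceding sections.

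For the first condition, I would observe that $\mU^{\rm log}_2(\xi_1,\xi_2)$ receives contributions only from admissible graphs $\Gamma$ with two vertices in $A_+$ (corresponding to the two vector fields) and a number of vertices in $A_0$ dictated by the degrees. Since $\xi_1,\xi_2$ are vector fields (of degree one as multivector fields), the combinatorics forces $m=2$ and $|E\Gamma|=2$, and the only relevant graphs are those where each of the two aerial vertices emits a single edge. One then checks, exactly as in \cite{K1}, that the associated differential operators $D_\Gamma(\xi_1,\xi_2)$ either vanish or cancel in pairs under the symmetrization forced by the $L_\infty$-structure. Crucially, this is a purely combinatorial/algebraic statement about the operators $D_\Gamma$ and does not depend on the specific values of the weights, so Kontsevich's original argument applies verbatim with $\varpi_\Gamma$ replaced by $\varpi^{\rm log}_\Gamma$.

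The second condition is the more delicate one and I expect it to be the main obstacle. Here one must show that $\mU^{\rm log}_n(\xi,\gamma_2,\dots,\gamma_n)$ vanishes for $\xi$ a linear vector field. The standard strategy is to argue that the vertex carrying $\xi$ has exactly one outgoing edge and at most one incoming edge (since $\xi$ is linear), and then to use an integration-by-parts / Stokes-type argument on the configuration space integral $\varpi^{\rm log}_\Gamma$ to reduce the weight to a boundary contribution that vanishes. The essential point is that the configuration space integral for such graphs is invariant under a suitable scaling or translation symmetry of the $\xi$-vertex, forcing the weight to vanish. Since the logarithmic one form $\Omega^{\rm log}_\Gamma$ coincides with $\Omega_\Gamma$ when one endpoint lies on the real line, and since the relevant symmetry argument in \cite{K1} only involves the behavior of the propagator under the action of the affine group on the configuration, I would check that the logarithmic propagator enjoys the same equivariance properties. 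The main technical difficulty will be that the regularized Stokes' formula of Theorem \ref{thm:regStokes} must be invoked in place of the ordinary Stokes' formula, and one must verify that the regularization terms ${\rm Reg}_B$ computed in Theorem \ref{thm:vanishingproperty} do not spoil the vanishing.

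Concretely, I would reduce the second condition to the vanishing of individual weights $\varpi^{\rm log}_\Gamma$ by the same graph-by-graph analysis as in Kontsevich's original proof, and then establish this vanishing by the symmetry argument above. Because $\mU^{\rm log}_1$ has already been identified with the Hochschild--Kostant--Rosenberg morphism, and because the combinatorial structure of the operators $D_\Gamma$ is identical to the non-logarithmic case, the only genuinely new input needed is the matching of the two propagators on the boundary together with the regularized Stokes' formula. I expect that, once this matching is in place, both conditions follow by importing Kontsevich's arguments, with the regularization bookkeeping being the sole point requiring care.
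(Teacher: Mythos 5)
Your reduction of both conditions to properties of the contributing graphs is on the right track (the $\xi$-vertex of a linear vector field has one outgoing and at most one incoming edge, so every contributing graph has a univalent vertex or a ``pass-through'' bivalent vertex), but the core of your argument has a genuine gap: you assert that the vanishing can be imported from \cite{K1} because it is ``purely combinatorial/algebraic \dots and does not depend on the specific values of the weights,'' or because the logarithmic propagator ``enjoys the same equivariance properties.'' Neither claim is correct. For the first condition, the only contributing graph is the two-vertex graph with edges in both directions, and its operator $D_\Gamma(\xi_1,\xi_2)=\sum_{i,j}\partial_j\xi_1^i\,\partial_i\xi_2^j$ is generically nonzero; there is no combinatorial cancellation, and one must prove that the \emph{weight} $\varpi^{\rm log}_\Gamma$ itself vanishes. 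In \cite{K1} this weight vanishing (and the analogous one for the bivalent pass-through vertex) is proved by a reflection symmetry of the angle form, and precisely this symmetry is broken for the logarithmic propagator $\frac{1}{2\pi i}\,d\log\bigl((z-w)/(\bar z-w)\bigr)$, which is why the statement cannot be ``imported verbatim.'' The paper instead supplies the missing computation: for a vertex $z$ with one incoming edge from $u$ and one outgoing edge to $v$, it integrates out $z$ explicitly,
\begin{equation*}
\int_{\bbH\setminus\{u,v\}} d\log\frac{u-z}{\bar u-z}\wedge d\log\frac{z-v}{\bar z-v}
= -2\pi i\log(\bar u-v)+\int_{-\infty}^{\infty}\log(x-v)\left(\frac{1}{x-u}-\frac{1}{x-\bar u}\right)dx
= 0,
\end{equation*}
using ordinary Stokes' formula and a residue/contour argument (closing in the lower half-plane, only the pole at $\bar u$ contributes). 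This explicit evaluation, together with the degree-reason vanishing for univalent vertices, is the actual content of the proof and is absent from your proposal.

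Two smaller points. First, your degree count for the first condition is off: with two vector fields each vertex emits one edge, so $|E\Gamma|=2$ forces $m=0$ (not $m=2$), leaving only the double-edge graph. Second, your concern about the regularized Stokes' formula and the counterterms ${\rm Reg}_B$ is misplaced here: the weights in question are top-degree ($|E\Gamma|=2n+m-2$), so by Theorem \ref{thm:formregular} the integrand is regular on the compactified configuration space, and the vanishing is established fiberwise in the position of the offending vertex by the elementary computation above; no boundary regularization enters this proposition at all.
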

\begin{proof}
Assume that the admissible graph $\Gamma\in {\rm Graph}_{n,m}$ with $n\geq 2$ contains a univalent vertex with associated coordinate $z$. The corresponding configuration space integral involves an integration over $z$ ranging over a two dimensional space, while there is only one 1-form which depends on $z$. Hence the integral is zero by degree reasons, i.~e.,
$\varpi^{\rm log}_\Gamma=0$ . 

Next suppose that $\Gamma$ contains a vertex with exactly one incoming and one outgoing edge, so it locally looks like this:
\[
\begin{tikzpicture}
\node[int, label={$u$}] (u) at (.5,1.5) {};
\node[int, label={$z$}] (z) at (.1,1) {};
\node[int, label=0:{$v$}] (v) at (0,.5) {};
\draw(-1,0)--(1,0);
\draw[-latex] (u) edge (z) (z) edge (v);.
\end{tikzpicture}
\]
Here possibly $u=v$. We claim that in this case $\varpi^{\rm log}_\Gamma=0$.
For fixed $u$, $v$, the part of the integral involving $z$ reads
\begin{align*}
  \int_{\bbH\setminus\{u,v\}}
  d\log\frac{u-z}{\bar u-z} \, \wedge \, d\log\frac{z-v}{\bar z -v}
  &=
   \int_{\bbH\setminus\{u,v\}}
   d\left(
  \log(\bar z -v) \, \wedge \, d\log\frac{u-z}{\bar u-z} 
  \right)
  \\&=
  -2\pi i \log(\bar u -v)
  +
  \int_{-\infty}^\infty
  \log(x -v) \left( \frac{1}{x-u}-\frac{1}{x-\bar u}\right) dx
  \\&=
  -2\pi i \log(\bar u -v)
  +
  2\pi i \log(\bar u -v)
  =0\, .
\end{align*}
Here we used Stokes' formula, and then evaluated the integral over $\R$ by closing the contour in the lower half-plane. Then, only the pole at $\bar u$ contributes, and  the result follows. 

Overall, we have shown that $\varpi^{\rm log}_\Gamma=0$ if the graph $\Gamma$ has either univalent vertices or bivalent vertices with one incoming and one outgoing edge.
But any graph that could possibly contribute to $\mU^{\rm log}_2(\xi_1, \xi_2)$ or $\mU^{\rm log}_n(\xi,\gamma_1, \dots, \gamma_n)$ has one of such features and hence the proposition follows.
\end{proof}

Summarizing, we have shown the following theorem:
\begin{thm}   \label{thm:final}
 The configuration space integrals \eqref{equ:varpilog} exist and define coefficients of a stable formality morphism $\mU^{\rm log}$, that satisfies the Kontsevich globalization conditions.
\end{thm}

\begin{rem}
The characteristic class (see \cite{thomaschar}), also known as the Duflo function of the formality morphism $\mU^{{\rm log}}$ has been computed in \cite{merkulov_exotic} and \cite{carloexplicit} and is equal to
 \[
\exp\left(\sum_{n\geq 2} \frac{\zeta(n)}{n(2\pi i)^n} x^n\right)\, .
 \]
\end{rem}

\begin{rem}
Note that if $\Gamma$ is an admissible graph with a type I vertex with no outgoing edges and at least two vertices, then $\varpi^{\rm log}_\Gamma=0$. Indeed, if we denote by $z$ the point in the upper half-plane defined by the vertex, then the integrand in \eqref{equ:varpilog} has no term involving $d\bar z$. 
In particular, it follows that the result \cite{shwheel} on vanishing of the wheel graphs is trivially true for $\mU^{\rm log}$.
\end{rem}

\begin{rem}
 The construction \cite{AT} of the first and third authors may be generalized to obtain a Drinfeld associator corresponding to the logarithmic propagator, which turns out to be the Knizhnik-Zamolodchikov associator. We leave the details to elsewhere.
\end{rem}


\begin{thebibliography}{1}
\bibitem{AT}
A.~Alekseev and C.~Torossian.
\newblock Kontsevich deformation quantization and flat connections,
\newblock {\em Comm. Math. Phys.}, 300(1): 47--64, 2010.

% \bibitem{AT2}
% A.~Alekseev and C.~Torossian.
% \newblock The Kashiwara-Vergne conjecture and Drinfeld's associators,
% \newblock {\em Ann. of Math. (2)}, 175(2): 415--463, 2012.

\bibitem{CF}
A.~Cattaneo and G.~Felder. 
\newblock A path integral approach to the Kontsevich quantization formula.
\newblock {\em Commun. Math. Phys.}, 212(3): 591--611, 2000.

\bibitem{K2}
M.~Kontsevich.
\newblock Operads and {M}otives in {D}eformation {Q}uantization.
\newblock {\em Lett. Math. Phys.}, 48: 35--72, 1999.

\bibitem{K1}
M.~Kontsevich.
\newblock Deformation quantization of {P}oisson manifolds.
\newblock {\em Lett. Math. Phys.}, 66(3): 157--216, 2003.

\bibitem{merkulov_exotic}
S.~Merkulov.
\newblock Exotic automorphisms of the {S}chouten algebra of polyvector fields,
  2008.
\newblock arXiv:0809.2385.

\bibitem{carloexplicit}
C.~A. {Rossi}.
\newblock The explicit equivalence between the standard and the logarithmic star product for Lie algebras, I.
\newblock {\em C. R. Math. Acad. Sci. Paris}, Volume 350(13-14): 661--664, 2012.

%\newblock On the compatibility between cup products, the Alekseev-Torossian connection and the Kashiwara-Vergne conjecture, I.
%\newblock {\em C. R. Math. Acad. Sci. Paris}, 350(17-18): 823--826, 2012.

\bibitem{carlothomas}
C.~A. Rossi and T.~Willwacher.
\newblock P.~{E}tingof's conjecture about {D}rinfeld associators, \emph{in preparation}.

\bibitem{shwheel}
B.~Shoikhet.
\newblock Vanishing of the {K}ontsevich integrals of the wheels.
\newblock {\em Lett. Math. Phys.}, 56(2): 141--149, 2001.

\bibitem{thomaschar}
T.~{Willwacher}.
\newblock {Characteristic classes in deformation quantization}.
\newblock 2012.
\newblock arxiv:1208.4249.

\end{thebibliography}
\end{document}